\documentclass[10pt,psamsfonts,reqno]{amsart}

\usepackage{amssymb,latexsym}
\usepackage{amsmath,amsfonts,amsthm}
\usepackage[all]{xy}
\usepackage{graphicx}

\usepackage[left=2.25cm,top=2.5cm,bottom=2.5cm,right=2.25cm,letterpaper]{geometry}

\theoremstyle{plain}
\newtheorem{theorem}{Theorem}[section]
\newtheorem*{main}{Main Theorem}
\newtheorem{proposition}[theorem]{Proposition}
\newtheorem{lemma}[theorem]{Lemma}
\newtheorem{corollary}[theorem]{Corollary}

\theoremstyle{definition}
\newtheorem{definition}[theorem]{Definition}
\newtheorem{construction}[theorem]{Construction}
\newtheorem{remark}[theorem]{Remark}

\numberwithin{equation}{section} %equations numbered within each section

\newcommand{\N}{\mathbb N}
\newcommand{\Z}{\mathbb Z}

\newcommand{\mcC}{\mathfrak{C}}
\newcommand{\mc}[1]{\mathcal{#1}}
\newcommand{\mcA}{\mc{A}}

\newcommand{\id}[1]{\mathfrak{#1}}
\newcommand{\f}{\id{f}}
\newcommand{\cond}{\id{c}}
\newcommand{\m}{\id{m}}
\newcommand{\n}{\id{n}}

\renewcommand{\ker}{\mathop{\mathrm{Ker}}\nolimits}

\newcommand{\into}{\hookrightarrow}

\renewcommand{\bar}{\protect\overline}
\renewcommand{\hat}{\protect\widehat}
\renewcommand{\tilde}{\widetilde}

\newcommand{\rank}{\mathop{\mathrm{rank}}\nolimits}
\DeclareMathOperator{\minSpec}{MinSpec}

\newcommand{\tensor}{\otimes}
\newcommand{\End}{\mathop{\mathrm{End}}\nolimits}
\newcommand{\chr}{\ensuremath{\operatorname{char}}}

\newcommand{\ds}{\displaystyle}

\begin{document}

\title[Ranks of indecomposable modules]{Ranks of indecomposable modules over\\ rings of
infinite Cohen-Macaulay type}

\author{Andrew Crabbe}
\address{Department of Mathematics\\ Syracuse University\\ Syracuse, NY 13244-1150} \email{amcrabbe@syr.edu}

\author{Silvia Saccon}
\address{Department of Mathematics\\ University of Nebraska--Lincoln\\ Lincoln, NE 68588-0130}
\email{s-ssaccon1@math.unl.edu}

\thanks{Parts of this work appear in Saccon's Ph.D. dissertation at the University of Nebraska--Lincoln, under the supervision of Roger Wiegand.}

\subjclass[2000]{Primary 13C14. Secondary 13C05.}

\date{December 20, 2011}

\keywords{Infinite Cohen-Macaulay type, indecomposable maximal
Cohen-Macaulay module, rank of a module}

\begin{abstract}
Let $(R,\m,k)$ be a one-dimensional analytically unramified local ring with minimal prime ideals $P_1$, \ldots, $P_s$. Our ultimate goal is to study the direct-sum behavior of maximal Cohen-Macaulay modules over $R$. Such behavior is encoded by the monoid~$\mcC(R)$ of isomorphism classes of maximal Cohen-Macaulay $R$-modules: the structure of this monoid reveals, for example, whether or not every maximal Cohen-Macaulay module is uniquely a direct sum of indecomposable modules; when uniqueness does not hold, invariants of this monoid give a measure of how badly this property fails. The key to understanding the monoid $\mcC(R)$ is determining the ranks of indecomposable maximal Cohen-Macaulay modules. Our main technical result shows that if $R/P_1$ has infinite Cohen-Macaulay type and the residue field $k$ is infinite, then there exist $|k|$ pairwise non-isomorphic indecomposable maximal Cohen-Macaulay $R$-modules of rank $(r_1,\dots,r_s)$ provided $r_1 \ge r_i$ for all $i\in \{1,\ldots, s\}$. This result allows us to describe the monoid $\mcC(R)$ when $\hat{R}/Q$ has infinite Cohen-Macaulay type for every minimal prime ideal $Q$ of the $\m$-adic completion $\hat{R}$. 
\end{abstract}

\maketitle

\section{Introduction}
A commutative ring $R$ is \textit{local} if $R$ is a Noetherian ring with exactly one maximal ideal, and a local ring $(R,\n)$ is \textit{analytically unramified} if the $\n$-adic completion $\hat{R}$ of $R$ is reduced. If $R$ is complete, then the Krull-Remak-Schmidt property holds for the class of finitely generated $R$-modules; that is, direct-sum decompositions of finitely generated $R$-modules are unique (see~\cite[Theorem~5.20]{S70}). However, there are examples of non-complete local rings for which direct-sum decomposition is not unique (see for example~\cite[Section~1]{E73} or~\cite[Sections~1 and~2]{rW99}). Thus it is natural to ask how modules behave over $R$, especially when $R$ is a non-complete local ring.

In this paper, we restrict our attention to one-dimensional analytically unramified local rings $(R,\m,k)$, and to the class of maximal Cohen-Macaulay $R$-modules. In this context, a maximal Cohen-Macaulay $R$-module is exactly a non-zero finitely generated torsion-free $R$-module. We say $R$ has \textit{finite Cohen-Macaulay type} if there exist only finitely many indecomposable maximal Cohen-Macaulay $R$-modules, up to isomorphism. Otherwise, we say $R$ has \textit{infinite Cohen-Macaulay type}.

One approach to the study of direct-sum behavior of modules over $R$ is to describe the monoid $\mcC(R)$ of isomorphism classes of maximal Cohen-Macaulay $R$-modules (together with $[0]$), with operation given by $[M]+[N]=[M\oplus N]$. This monoid is free if and only if the Krull-Remak-Schmidt property holds for the class of maximal Cohen-Macaulay $R$-modules; moreover, some invariants of this monoid give a measure of how badly the Krull-Remak-Schmidt property can fail. 

The notion of rank of a module plays a fundamental role in studying the monoid $\mcC(R)$, and thus in understanding the direct-sum behavior of modules over $R$. The \textit{rank} of an $R$-module~$M$ is the tuple consisting of the vector-space dimensions of $M_P$ over $R_P$, where $P$ ranges over the minimal prime ideals of $R$. More precisely, let $P_1$, \ldots, $P_s$ denote the minimal prime ideals of $R$. The rank of~$M$ at the minimal prime ideal $P_i$ is $r_i=\dim_{R_{P_i}}M_{P_i}$, the dimension of the vector space $M_{P_i}$ over the field $R_{P_i}$. The rank of $M$ is the $s$-tuple $(r_1,\ldots, r_s)$. In this paper,  we are primarily interested in the following question: What ranks occur for indecomposable maximal Cohen-Macaulay $R$-modules? 
By answering this question, we will be able to give a precise description of the monoid $\mcC(R)$. %and hence to understand the direct-sum behavior of maximal Cohen-Macaulay $R$-modules. 
%Answering this question will allow us to describe the monoid~$\mcC(R)$ of isomorphism classes  of maximal Cohen-Macaulay $R$-modules (together with $[0]$).% when $\hat{R}/Q$ has infinite Cohen-Macaulay type for all minimal prime ideals $Q$ of $\hat{R}$.

%In this paper, our goal is to describe the monoid $\mcC(R)$ of isomorphism classes  of maximal Cohen-Macaulay $R$-modules when $\hat{R}/Q$ has infinite Cohen-Macaulay type for all minimal prime ideals $Q$ of $\hat{R}$. Thus we are primarily interested in the following question: What ranks occur for indecomposable maximal Cohen-Macaulay $R$-modules?

When $R$ has finite Cohen-Macaulay type, N.~Baeth and M.~Luckas give a complete answer to this question in~\cite[Main Theorem]{BL09}. Baeth and Luckas classify all possible ranks of indecomposable maximal Cohen-Macaulay $R$-modules depending on the number of minimal prime ideals (over such rings there are at most three minimal prime ideals). Their work builds upon results of Green and Reiner~\cite[pages~76-77, 81-82]{GR78}, R.~Wiegand and
S.~Wiegand~\cite[Theorem~3.9]{rWsW94}, \c{C}imen~\cite[Theorem~2.2]{C94}, Baeth~\cite[Theorem~4.2]{B07}, and Arnavut, Luckas and S.~Wiegand~\cite[Theorem~2.3]{ALsW07}. Moreover, in~\cite[Theorem~3.4]{BLunp}, Baeth and Luckas use the list of ranks of indecomposable maximal Cohen-Macaulay $R$-modules to give a description of the monoid $\mcC(R)$.% of isomorphism classes of maximal Cohen-Macaulay $R$-modules (together with $[0]$).

The question is open when $R$ has infinite Cohen-Macaulay type. In~\cite[Section~2]{rW89}, R.~Wiegand constructs an indecomposable maximal Cohen-Macaulay module of constant rank $(r,\ldots, r)$ for each positive integer $r$. In~\cite[Lemma~2.2]{rW01}, he shows that if $R/P_1$ satisfies a certain specific condition (corresponding to Case (1) of Remark~\ref{R:remark1}), and if $r_1>0$ and $r_1\ge r_i$ for all $i\in \{1,\ldots, s\}$, then there is an indecomposable maximal
Cohen-Macaulay $R$-module of rank $(r_1, \ldots, r_s)$.

In this article, we prove a stronger result: we generalize Wiegand's theorem and we determine the cardinality of the set of isomorphism classes of indecomposable maximal Cohen-Macaulay $R$-modules. Our goal is to prove the following theorem.

\begin{main}
Let $(R,\m,k)$ be a one-dimensional analytically unramified local ring with minimal prime ideals $P_1$, \ldots, $P_s$. Assume $R/P_1$ has infinite Cohen-Macaulay type. Let $(r_1, \ldots, r_s)$ be a non-trivial $s$-tuple of non-negative integers with $r_1\ge r_i$ for all $i\in \{1,\ldots, s\}$.
\begin{enumerate}
\item There exists an indecomposable maximal Cohen-Macaulay $R$-module of rank $(r_1,
\ldots, r_s)$.
\item If the residue field $k$ is infinite, then the set of isomorphism classes of indecomposable maximal Cohen-Macaulay $R$-modules of rank $(r_1, \ldots, r_s)$ has cardinality $|k|$.
\end{enumerate}
\end{main}

In the proof of this result, we put a specific construction due to R.~Wiegand in a general context. This introduces various technical difficulties, particularly in the cases when the multiplicity of the ring is three, or when the residue field is not of characteristic zero. There are two key ingredients in the proof, namely:
\begin{enumerate}
\item [(a)] Proposition~\ref{P:prop} that allows us to focus our attention to finite dimensional $k$-algebras of dimension at least four, and 
\item [(b)] the construction of a family of modules of a certain rank---we show that these modules are indecompo\-sa\-ble in all the cases that can arise for a finite-dimensional $k$-algebra of dimension at least four. This new construction (depending on a parameter $t\in k$) allows us to determine the cardinality of the set of isomorphism classes of indecomposable maximal Cohen-Macaulay modules. 
\end{enumerate}

In Section~\ref{S:background}, we provide some background and some known results. In Section~\ref{S:main}, we prove the main theorem and use this result to describe the monoid $\mcC(R)$ when $\hat{R}/Q$ has infinite Cohen-Macaulay type for all minimal prime ideals $Q$ of $\hat{R}$. In Section~\ref{S:example}, we study a particular ring $R$ with exactly two minimal prime ideals $P_1$ and $P_2$ such that $R/P_1$ has infinite Cohen-Macaulay type and $R/P_2$ has finite Cohen-Macaulay type; we show that certain ranks cannot occur for this example.

\section{Background} \label{S:background}

One-dimensional reduced local rings $(R,\m)$ of finite Cohen-Macaulay type are completely cha\-racte\-rized as those rings satisfying the following two conditions of Drozd and Ro{\u\i}ter (see~\cite{DR67}):
\begin{list}{}{\setlength{\leftmargin}{1.7cm}}
\item[(\textbf{DR1})] the integral closure $\overline{R}$ of $R$ in the total quotient ring $Q(R)$ of $R$ is generated by at most three elements as an $R$-module;
\item[(\textbf{DR2})] $(\m\overline{R}+R)/R$ is a cyclic $R$-module.
\end{list}
These two conditions were first introduced in~\cite{DR67} by Drozd and Ro{\u\i}ter, who proved the theorem in the special case of a ring which is a localization of a module-finite $\Z$-algebra.
R.~Wiegand proved necessity of the two conditions in~\cite{rW89} and sufficiency, except in the case when $k$ is imperfect of characteristic~$2$, in~\cite{rW89} and~\cite{rW94}. \c{C}imen proved the remaining case of the theorem in his Ph.D. dissertation~\cite{C94}.

The main tool for constructing indecomposable (maximal Cohen-Macaulay) modules is the Artinian pair (see for example~\cite[Section 1]{rW89}).

\begin{definition}
An \textit{Artinian pair} $A\into B$ is a module-finite extension of commutative Artinian rings.
\end{definition}

\begin{definition}
A \textit{module} over the Artinian pair $A\into B$ is a pair $V\into W$ such that
\begin{enumerate}
\item $W$ is a finitely generated projective $B$-module,
\item $V$ is an $A$-submodule of $W$ and
\item $BV=W$.
\end{enumerate}
\end{definition}

\begin{definition}
A \textit{homomorphism} from an $(A\into B)$-module $V_1\into W_1$ to an $(A\into B)$-module $V_2\into W_2$ is a $B$-module homomorphism $\varphi\colon W_1\to W_2$ such that
$\varphi(V_1)\subseteq V_2$.
\end{definition}

Thus there are notions of isomorphism of $(A\into B)$-modules, endomorphism rings $\End_B(V,W)$ of an $(A\into B)$-module $V\into W$ and direct-sum decompositions of $V\into W$.

\begin{definition}
We say that an Artinian pair $A\into B$ has \textit{finite representation type} if there exist only finitely many indecomposable $(A\into B)$-modules, up to isomorphism.
\end{definition}

Given a one-dimensional analytically unramified local ring $(R,\m,k)$ with integral closure $\overline{R}\ne R$, we can associate to $R$ an Artinian pair $R_\text{art}$ in the following
way. Let $\cond=\{r\in R \mid r\overline{R}\subseteq R\}$ denote the conductor of $R$, i.e., the largest ideal of $\overline{R}$ contained in $R$. As $\cond$ contains a non-zero-divisor of $R$,
both $R/\cond$ and $\overline{R}/\cond$ are Artinian rings, and $R/\cond\into \overline{R}/\cond$ is an Artinian pair, denoted~$R_{\text{art}}$. We have a pullback diagram, called the
\textit{conductor square} of $R$:
\[
\xymatrix{R\ar@{^{(}->}[r] \ar@{->>}[d] & \overline{R} \ar@{->>}[d]\\
\ds \frac{R}{\cond}\ar@{^{(}->}[r] & \ds \frac{\overline{R}}{\cond}
.}
\]
Given a maximal Cohen-Macaulay $R$-module $M$, we can associate to $M$ an $R_\text{art}$-module $M_\text{art}=(M/\cond M \into \overline{R}M/\cond M)$, where $\overline{R}M$ denotes the $\overline{R}$-submodule of $Q(R)\tensor_R M$ generated by the image of~$M$.

For a one-dimensional analytically unramified local ring $(R,\m,k)$, the Drozd and Ro{\u\i}ter conditions can be interpreted in terms of the Artinian pair $R_\text{art}=(A\into B)$, where $(A,\n,k)$ is a local ring. In this case, the Drozd and Ro{\u\i}ter conditions  (\textbf{DR1}) and (\textbf{DR2}) are equivalent to:
\begin{list}{}{\setlength{\leftmargin}{1.7cm}}
\item[(\textbf{dr1})] $\ds \dim_k \frac{B}{\n B}\le 3$,
\item[(\textbf{dr2})] $\ds \dim_k\left(\frac{\n B+A}{\n^2B+A}\right)\le 1$.
\end{list}
That is, $R$ satisfies (\textbf{DR1}) and (\textbf{DR2}) if and only if $R_\text{art}$ satisfies (\textbf{dr1}) and (\textbf{dr2}).

In order to study maximal Cohen-Macaulay $R$-modules, it is convenient to work in the category of $R_\text{art}$-modules. The following theorems summarize some facts about the relationships
between $R$ and $R_\text{art}$, and the modules $M$ and $M_\text{art}$.

\begin{theorem}[\cite{rW89}, Propositions 1.5-1.9] \label{T:theorem1}
Let $(R,\m,k)$ be a one-dimensional analytically unramified local ring, and let $R_{\normalfont\text{art}}$ denote the corresponding Artinian pair. Let $\overline{R}$ denote the integral closure of $R$ in its total quotient ring, and assume $\overline{R}\ne R$. Let $M$, $N$ be maximal Cohen-Macaulay $R$-modules, and let $V\into W$ be an $R_{\normalfont\text{art}}$-module.
\begin{enumerate}
\item $(V\into W)\cong X_{\normalfont\text{art}}$ for some maximal Cohen-Macaulay
$R$-module $X$ if and only if $W\cong F/\cond F$ for some finitely generated projective $\overline{R}$-module $F$.
\item $(M\oplus N)_{\normalfont\text{art}}\cong M_{\normalfont\text{art}}\oplus N_{\normalfont\text{art}}$.
\item $M_{\normalfont\text{art}}\cong N_{\normalfont\text{art}}$ if and only if $M\cong N$.
\item The Krull-Remak-Schmidt theorem holds for direct-sum decompositions
of modules over $R_{\normalfont\text{art}}$.
\item $R$ has finite Cohen-Macaulay type if and only if $R_{\normalfont\text{art}}$ has finite representation type.
\end{enumerate}
\end{theorem}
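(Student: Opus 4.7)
The plan is to exploit the conductor pullback square to show that $M \mapsto M_{\text{art}}$ is essentially an equivalence between the category of maximal Cohen-Macaulay $R$-modules and the full subcategory of $R_{\text{art}}$-modules $(V\into W)$ for which $W$ lifts to a projective $\overline{R}$-module. The central preliminary fact is that, under our hypotheses, $\overline{R}$ is a finite product of discrete valuation rings $V_1,\dots,V_s$---one for each minimal prime $P_i$. Consequently, every finitely generated torsion-free $\overline{R}$-module, in particular $\overline{R}M$ for maximal Cohen-Macaulay $M$, is projective, and every finitely generated projective $B:=\overline{R}/\cond$-module lifts uniquely up to isomorphism to a projective $\overline{R}$-module by prescribing its rank at each DVR factor. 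Applying the conductor pullback at the module level yields the key identification
\[
M \;\cong\; M/\cond M \,\times_{\overline{R}M/\cond\overline{R}M}\, \overline{R}M,
\]
which is used throughout.

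Part (2) is immediate, since both functors $M\mapsto M/\cond M$ and $M\mapsto \overline{R}M/\cond\overline{R}M$ are additive. For part (1), the forward direction takes $F:=\overline{R}X$, which is projective by the preliminary observation. For the reverse direction, given $(V\into W)$ with $W\cong F/\cond F$, form the pullback $X:=V\times_{W}F$ along $V\into W$ and $F\onto F/\cond F$. Since $F$ is torsion-free and $V\into W$ is injective, the projection $X\to F$ is injective, so $X$ is a finitely generated torsion-free (hence maximal Cohen-Macaulay) $R$-module, and a direct check gives $X_{\text{art}}\cong(V\into W)$. For part (3), any isomorphism $M_{\text{art}}\to N_{\text{art}}$ can be lifted to an $\overline{R}$-module isomorphism $\overline{R}M\to\overline{R}N$ compatible with the given isomorphism on the $B$-side---the compatibility uses that the reduction map $\operatorname{Aut}_{\overline{R}}(\overline{R}M)\to\operatorname{Aut}_B(\overline{R}M/\cond\overline{R}M)$ is surjective, which holds because $\cond$ lies in the Jacobson radical of $\overline{R}$. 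The pullback description then promotes this data to an isomorphism $M\to N$.

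For part (4), observe that the endomorphism ring $E:=\End(V\into W)$ is an $A$-subalgebra of $\End_B(W)$, and the latter is module-finite over the Artinian ring $A$; hence $E$ is module-finite over $A$, so right Artinian as a ring. Its Jacobson radical is thus nilpotent, idempotents lift modulo $J(E)$, and $E/J(E)$ is semisimple Artinian; if $E$ has no non-trivial idempotents, $E/J(E)$ must be a division ring, so $E$ is local. Indecomposable $R_{\text{art}}$-modules therefore have local endomorphism rings and Krull-Remak-Schmidt follows. Part (5) combines (1)--(4): by (2) and (3) the functor $M\mapsto M_{\text{art}}$ preserves and reflects isomorphism and direct sums, sending indecomposable MCM modules injectively onto indecomposables in the image of $M \mapsto M_{\text{art}}$; and since every projective $B$-module lifts to a projective $\overline{R}$-module in our setting, part (1) shows every indecomposable $R_{\text{art}}$-module is hit.

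I expect the main technical difficulty to lie in the isomorphism lifting of part (3): one must produce not merely some abstract isomorphism $\overline{R}M\cong\overline{R}N$ of projective $\overline{R}$-modules, but one that is compatible with the prescribed isomorphism on the $B$-side. The surjectivity of $\operatorname{Aut}_{\overline{R}}\onto\operatorname{Aut}_B$ handles this, and ultimately rests on $\cond$ containing a non-zero-divisor so that each component ideal of $\cond$ is proper in the corresponding DVR factor, hence contained in its maximal ideal.
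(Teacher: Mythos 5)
The paper gives no proof of this theorem---it is quoted verbatim from \cite{rW89}---so your proposal can only be measured against the standard arguments, and your parts (1)--(4) do reproduce them correctly: the pullback $X=V\times_W F$ for the realization statement, lifting of isomorphisms through the conductor square (using that $\cond\subseteq J(\overline{R})$ and that $\overline{R}M$, $\overline{R}N$ are projective), and Azumaya's theorem applied to $E=\End_B(V,W)$, which is module-finite over the Artinian ring $A$ and hence has nilpotent radical and lifting of idempotents. However, your preliminary structural claim is false: $\overline{R}$ is a product $\overline{R_1}\times\cdots\times\overline{R_s}$ of \emph{semilocal} principal ideal domains, one per minimal prime, but these factors need not be local. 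When $R$ is not complete, $\overline{R/P_i}$ can have several maximal ideals---for instance $R=\Q[x,y]_{(x,y)}/(y^2-x^2-x^3)$ is a local domain whose integral closure $\overline{R}=R[y/x]$ is a domain with two maximal ideals, since $(y/x)^2=1+x$ and $t^2=1$ has two roots in the residue field. This is exactly what the paper's splitting number $q$ measures, and exactly why Theorem~\ref{T:theorem2} carries the extra hypothesis that $\overline{R}$ be a direct product of local rings. Consequently your claim that every finitely generated projective $B$-module lifts to a projective $\overline{R}$-module is wrong: projectives over each PID factor are free, so $F/\cond F$ has \emph{equal} ranks at all local factors of $B$ coming from one branch $\overline{R/P_i}$, while a projective $B$-module may have unequal ranks there (e.g.\ $W=B'\times (B'')^{(2)}$ in the example above). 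The restriction ``$W\cong F/\cond F$'' in item (1) is a genuine restriction, not automatic.

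This error is fatal only in your part (5), but there it strikes twice. First, ``every indecomposable $R_{\text{art}}$-module is hit'' is false, as just explained. Second, your claim that $M\mapsto M_{\text{art}}$ sends indecomposable maximal Cohen-Macaulay modules to indecomposable pair modules does not follow from (2) and (3): those show the functor \emph{reflects} indecomposability, but a direct summand of $M_{\text{art}}$ need not have liftable $B$-part, so $M$ indecomposable does not force $M_{\text{art}}$ indecomposable (preservation is precisely Theorem~\ref{T:theorem2}(2), which needs the product-of-local-rings hypothesis). The standard repair for ``finite CM type $\Rightarrow$ finite representation type'' avoids universal liftability: given any pair module $(V\into W)$, choose $W''$ with $W\oplus W''$ free over $B$; then $(V\into W)\oplus(W''\into W'')$ has free $B$-part, and free $B$-modules certainly lift, so by (1) this sum is $X_{\text{art}}$ for some maximal Cohen-Macaulay $X$. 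Decomposing $X$ into indecomposables and applying (2) and Krull-Remak-Schmidt (4), every indecomposable pair module is a summand of $Y_{\text{art}}$ for one of the finitely many indecomposable $Y$, each of which contributes finitely many summands. Note also that the converse direction needs more than the injectivity of $[M]\mapsto[M_{\text{art}}]$ from (3), since finitely many indecomposable pair modules still admit infinitely many multisets; one must bound the number of summands of $M_{\text{art}}$ for indecomposable $M$, e.g.\ by observing that if a proper nonempty subfamily of the summands has liftable $B$-part then so does its complement, whence (1)--(3) would split $M$, and a zero-sum argument over the finite set of lifting-obstruction vectors then gives the bound.
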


\begin{theorem}[\cite{rWsW94}, Proposition 2.2] \label{T:theorem2}
With the notation as in Theorem \ref{T:theorem1}, assume $\overline{R}$ is a direct product of local rings.
\begin{enumerate}
\item The functor $M\mapsto M_{\normalfont\text{art}}$ determines a bijection between the set of isomorphism classes of maximal Cohen-Macaulay $R$-modules and the set of isomorphism classes of $R_{\normalfont\text{art}}$-modules.
\item $M$ is indecomposable if and only if $M_{\normalfont\text{art}}$ is indecomposable.
\end{enumerate}
\end{theorem}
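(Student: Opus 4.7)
The plan is to establish the two parts in order, using Theorem~\ref{T:theorem1} as the main tool. Theorem~\ref{T:theorem1}(3) already shows that the assignment $[M]\mapsto[M_{\text{art}}]$ is injective on isomorphism classes, so for part~(1) I only need to prove surjectivity; part~(2) will then follow by combining this surjectivity with the direct-sum compatibility of Theorem~\ref{T:theorem1}(2).

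For the surjectivity in part~(1), Theorem~\ref{T:theorem1}(1) reduces the problem to showing that the top piece $W$ of any $R_{\text{art}}$-module $V\into W$ has the form $F/\cond F$ for some finitely generated projective $\bar R$-module $F$. This is precisely where the hypothesis enters: writing $\bar R=R_1\times\cdots\times R_t$ with each $R_i$ local, the conductor $\cond$ is an ideal of $\bar R$ and therefore splits as $\cond_1\times\cdots\times\cond_t$; consequently $B=\bar R/\cond$ is a product of local Artinian rings $B_i=R_i/\cond_i$. Using the orthogonal idempotents, any finitely generated projective $B$-module decomposes as a product of finitely generated projective $B_i$-modules, and since each $B_i$ is local, projective equals free. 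Hence $W\cong\prod_{i=1}^{t} B_i^{n_i}$ for some tuple $(n_1,\ldots,n_t)$, and taking $F=\prod_{i=1}^{t} R_i^{n_i}$ produces a finitely generated projective $\bar R$-module with $F/\cond F\cong W$.

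For part~(2), first observe that if $M\cong M_1\oplus M_2$ nontrivially, then $M_{\text{art}}\cong(M_1)_{\text{art}}\oplus(M_2)_{\text{art}}$ by Theorem~\ref{T:theorem1}(2); each summand $(M_i)_{\text{art}}$ is nonzero because $\cond\subseteq\m$ (which holds since $\bar R\ne R$) and Nakayama's lemma forces $M_i/\cond M_i\ne 0$ whenever $M_i\ne 0$. This settles the implication ``$M_{\text{art}}$ indecomposable $\Rightarrow$ $M$ indecomposable''. For the reverse implication, suppose $M_{\text{art}}\cong V_1\oplus V_2$ with both $V_i$ nonzero. Surjectivity from part~(1) provides nonzero maximal Cohen-Macaulay modules $X_1,X_2$ with $(X_i)_{\text{art}}\cong V_i$. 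Theorem~\ref{T:theorem1}(2) then gives $(X_1\oplus X_2)_{\text{art}}\cong M_{\text{art}}$, and Theorem~\ref{T:theorem1}(3) lifts this isomorphism to $M\cong X_1\oplus X_2$, so $M$ decomposes nontrivially.

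The only non-formal step is the lifting of projectives carried out in part~(1); this is exactly where the assumption that $\bar R$ is a finite product of local rings is essential, since over such a ring finitely generated projectives are classified coordinate-wise by their free ranks and can therefore be matched to any prescribed $W$. Absent this hypothesis, there would be no reason to expect every finitely generated projective $B$-module to lift to a finitely generated projective $\bar R$-module, and the argument would break down.
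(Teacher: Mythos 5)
Your proof is correct, and in fact the paper offers no argument of its own to compare against: Theorem~\ref{T:theorem2} is imported from~\cite{rWsW94} without proof. Your reconstruction is essentially the standard one behind that citation: injectivity is Theorem~\ref{T:theorem1}(3), surjectivity reduces via Theorem~\ref{T:theorem1}(1) to lifting a finitely generated projective $B$-module to a projective $\overline{R}$-module, and this is exactly where the product-of-local-rings hypothesis is spent (each $B_i=\overline{R}_i/\cond_i$ is local, so projective equals free componentwise and $F=\prod \overline{R}_i^{(n_i)}$ lifts $W$); your Nakayama step for the easy direction of~(2) is the same argument the paper itself uses in the proof of Proposition~\ref{P:local}. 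One hairline detail you could make explicit: each $\cond_i$ is a proper ideal of $\overline{R}_i$ (otherwise the corresponding idempotent of $\overline{R}$ would lie in $\cond\subseteq R$, contradicting that the local ring $R$ has no nontrivial idempotents unless $\overline{R}=R$), so each $B_i$ is a nonzero local Artinian ring; likewise, in the reverse direction of~(2) one should note that a direct summand $V_i\into W_i$ of $M_{\text{art}}$ is again an $R_{\text{art}}$-module (its top $W_i$ is a summand of $W$, hence projective, and $BV_i=W_i$), which is what licenses applying surjectivity to it. Neither point affects the validity of your argument.
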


\begin{proposition}\label{P:local}
Let $(R,\m,k)$ be a one-dimensional analytically unramified local ring with minimal prime ideals $P_1$, \ldots, $P_s$. Let $\overline{R}$ denote the integral closure of $R$ in its total quotient ring, and assume $\overline{R}\ne R$. Set $R_i=R/P_i$ and $B_i=\overline{R}_i/\cond
\overline{R}_i$ for $i\in \{1,\ldots, s\}$. If $V\into W$ is an indecomposable $R_{\normalfont\text{art}}$-module with $W=B_1^{(r_1)}\times\cdots\times B_s^{(r_s)}$, then $(V\into W) \cong X_{\normalfont\text{art}}$ for an indecomposable maximal Cohen-Macaulay $R$-module $X$ of rank $(r_1,\ldots, r_s)$.
\end{proposition}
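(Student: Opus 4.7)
The plan is to produce the module $X$ via Theorem \ref{T:theorem1}(1), read off its rank from the structure of $\overline{R}X$, and transfer indecomposability of $V\into W$ back to $X$ using Theorem \ref{T:theorem1}(2)-(3).

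First I would unpack the ring structure. Since $R$ is analytically unramified (hence reduced) with minimal primes $P_1,\dots,P_s$, the total quotient ring splits as $Q(R)=\prod_{i}Q(R_i)$, forcing $\overline{R}=\prod_{i=1}^{s}\overline{R_i}$ inside $Q(R)$, and consequently $\overline{R}/\cond\overline{R}\cong\prod_{i=1}^{s}B_i$. Setting $F=\overline{R_1}^{(r_1)}\times\cdots\times\overline{R_s}^{(r_s)}$ yields a finitely generated projective $\overline{R}$-module with $F/\cond F\cong W$. Theorem \ref{T:theorem1}(1) then supplies a maximal Cohen-Macaulay $R$-module $X$ with $X_{\text{art}}\cong(V\into W)$. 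Tracing through the standard realization of $X$ as the pullback of $V\into W=F/\cond F$ along the projection $F\onto F/\cond F$ gives $\overline{R}X=F$ inside $Q(R)\tensor_R X$.

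Next, to compute the rank of $X$, I would localize $\overline{R}X\cong F=\prod_j\overline{R_j}^{(r_j)}$ at $P_i$. Since the minimal prime $P_i$ of $R$ is the contraction of the minimal prime of $\overline{R}$ corresponding to the $i$-th factor, the other factors $\overline{R_j}$ (with $j\ne i$) vanish upon inverting $R\setminus P_i$, while $\overline{R_i}$ localizes to its fraction field $R_{P_i}$. Thus $X_{P_i}=(\overline{R}X)_{P_i}\cong R_{P_i}^{(r_i)}$, so $X$ has rank $(r_1,\dots,r_s)$. Finally, for indecomposability, a nontrivial direct-sum decomposition $X\cong X_1\oplus X_2$ with $X_1,X_2\ne 0$ would pass through the $(-)_{\text{art}}$ functor by Theorem \ref{T:theorem1}(2) to yield $X_{\text{art}}\cong (X_1)_{\text{art}}\oplus (X_2)_{\text{art}}$; neither summand is zero by Theorem \ref{T:theorem1}(3), contradicting the indecomposability of $V\into W$.

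The only delicate step is the identification $\overline{R}X\cong F$, which is what makes the rank computation clean. If one prefers not to invoke the explicit pullback construction, this can be argued independently: any $\overline{R}$-projective $P$ satisfying $P/\cond P\cong W$ must be isomorphic to $F$, because $\overline{R}=\prod\overline{R_i}$ is a product of semilocal principal ideal rings, so each projective component is free and its rank over $\overline{R_i}$ is determined by the $B_i$-rank $r_i$ of the corresponding factor of $W$.
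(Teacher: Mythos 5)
Your proposal follows the paper's proof in all essentials: realize $W$ as $F/\cond F$ for $F=\overline{R}_1^{(r_1)}\times\cdots\times\overline{R}_s^{(r_s)}$, invoke Theorem~\ref{T:theorem1}(1) to get $X$ with $X_{\text{art}}\cong(V\into W)$, read the rank off $\overline{R}X\cong F$, and transfer indecomposability through $(-)_{\text{art}}$ using Theorem~\ref{T:theorem1}(2). In fact you supply more detail than the paper at one point: the paper simply asserts $\overline{R}X\cong F$, whereas you justify it (either via the pullback realization or via the observation that projectives over the product of semilocal PIDs $\overline{R}_i$ are free with ranks detected modulo $\cond$), and your localization computation $X_{P_i}=(\overline{R}X)_{P_i}\cong R_{P_i}^{(r_i)}$ is the correct elaboration of the paper's one-line rank claim. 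The single place where you diverge is also the single place where your citation does not quite do the work: you claim that a nonzero summand $X_1$ has $(X_1)_{\text{art}}\ne 0$ ``by Theorem~\ref{T:theorem1}(3),'' but that item compares two maximal Cohen-Macaulay (hence nonzero) modules and says nothing directly about vanishing of $(X_1)_{\text{art}}$. The paper's argument here is Nakayama's lemma: if $(X_1)_{\text{art}}=0$ then in particular $X_1/\cond X_1=0$, and since $\cond\subseteq\m$ and $X_1$ is finitely generated, $X_1=0$. This is a one-line repair, so your proof is correct in substance, but you should replace the appeal to Theorem~\ref{T:theorem1}(3) with the Nakayama argument.
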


\begin{proof}
Let $F= \overline{R}_1^{(r_1)}\times \cdots\times \overline{R}_s^{(r_s)}$ be a projective $\overline{R}$-module. Since $W \cong F/\cond F$, Theorem~\ref{T:theorem1}(2) guarantees the existence of a maximal Cohen-Macaulay $R$-module $X$ such that $X_{\text{art}} \cong (V\into W)$.

Suppose $X=X_1\oplus X_2$ for $R$-modules $X_1$ and $X_2$. Then $X_{\text{art}}\cong X_{1,\text{art}}\oplus X_{2,\text{art}}$, and since $X_{\text{art}}$ is indecomposable, $X_{1,\text{art}}=0$ or $X_{2,\text{art}}=0$. Without loss of generality, assume $X_{1,\text{art}}=(X_1/\cond X_1\into \overline{R}X_1/\cond X_1)=0$; in particular, the first component $X_1/\cond X_1$ of
$X_{1,\text{art}}$ is zero. Since $\cond\subseteq \m$, by Nakayama's lemma we conclude that $X_1=0$. Thus $X$ is indecomposable. Since $\overline{R}X \cong F$, we see that $\rank (X)=(r_1, \ldots, r_s)$.
\end{proof}

It is often useful to study the Artinian pair $k\into D$ obtained from the Artinian pair $A\into B$ by reducing modulo the maximal ideal of $A$. We use the following results in~\cite{rW89}.

\begin{lemma}[\cite{rW89}, Lemma 2.4]\label{L:lemma2}
Let $A\into B$ be an Artinian pair, and let $I$ be a nilpotent ideal of $B$. The functor from the category of $(A\into B)$-modules to the category of $(A/(I\cap A)\into B/I)$-modules defined by
\[
(V\into W)\mapsto \left(\frac{V+IW}{IW}\into \frac{W}{IW}\right)
\]
is surjective on isomorphism classes and reflects indecomposable objects.
\end{lemma}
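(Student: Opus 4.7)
The plan is to establish three things in turn: that the functor is well-defined, that it is essentially surjective on isomorphism classes, and that it reflects indecomposable objects. Well-definedness amounts to bookkeeping: $W/IW \cong W \tensor_B B/I$ remains finitely generated projective over $B/I$; the submodule $(V+IW)/IW$ of $W/IW$ is an $A$-submodule annihilated by $I\cap A$, hence becomes a module over $A/(I\cap A)$; and applying the projection to $BV=W$ yields $(B/I)\cdot((V+IW)/IW) = W/IW$.

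The heart of the argument will be essential surjectivity. Given a module $(\bar{V}\into \bar{W})$ over $(A/(I\cap A) \into B/I)$, the key technical step is to lift the finitely generated projective $(B/I)$-module $\bar{W}$ to a finitely generated projective $B$-module $W$ with $W/IW \cong \bar{W}$. I would realize $\bar{W}$ as the image of an idempotent $\bar{e}\in M_n(B/I)$, then lift $\bar{e}$ to an idempotent $e\in M_n(B)$ through the nilpotent ideal $M_n(I)$ by the standard inductive lifting procedure; the image of $e$ is the desired $W$. Next I would take $V$ to be the full preimage of $\bar{V}$ under the surjection $W\onto W/IW$. Then $IW \subseteq V$, and because $\bar{V}$ is $A$-stable and the $A$-action on $W/IW$ is just the reduction of the $A$-action on $W$, the subgroup $V$ is an $A$-submodule of $W$. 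Since $BV$ contains $IW$ and projects onto $\bar{B}\bar{V}=\bar{W}$, we get $BV + IW = W$; combined with $IW\subseteq V \subseteq BV$, this forces $BV=W$, so $(V\into W)$ is an honest $(A\into B)$-module whose image under the functor is isomorphic to $(\bar{V}\into \bar{W})$.

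For reflection of indecomposables, note first that the functor visibly commutes with direct sums, since both the operation $W\mapsto W/IW$ and the formation of $(V+IW)/IW$ distribute across $\oplus$. Hence if $(V\into W) = (V_1\into W_1)\oplus (V_2\into W_2)$ and the image is indecomposable, one summand of the image vanishes, say $W_2/IW_2=0$. Nilpotency of $I$ then gives $W_2 = IW_2 = I^2W_2 = \cdots = I^nW_2 = 0$ for $n$ large enough, and thus $V_2\subseteq W_2 = 0$, so the decomposition was trivial.

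The only genuinely nontrivial input is the idempotent-lifting step used to produce $W$; everything else reduces to Nakayama-style manipulations made possible by the nilpotence of $I$, so the main obstacle, if any, is simply to isolate the lifting argument cleanly and check that the $A$-action on the preimage $V$ behaves as expected.
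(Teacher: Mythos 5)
Your proof is correct and follows essentially the same route as the source the paper cites for this lemma (\cite{rW89}, Lemma 2.4): essential surjectivity via lifting an idempotent through the nilpotent ideal $M_n(I)$ to lift the projective $B/I$-module, taking $V$ to be the full preimage of $\bar{V}$, and reflection of indecomposables by observing that the functor is additive and that $W_2=IW_2$ forces $W_2=0$ by nilpotence of $I$. Your observation that fullness of the functor is never needed --- only additivity plus the fact that an object with zero image is zero --- is exactly the right economy of hypotheses.
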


\begin{lemma}[\cite{rW89}, Lemma 2.4]\label{L:lemma3}
Let $A\into B$ be an Artinian pair, and let $C$ be a ring between $A$ and $B$. The functor from the category of $(A\into C)$-modules to the category of $(A\into B)$-modules defined by
\[
(V\into W)\mapsto (V\into B\tensor_C W)
\]
is full and faithful, and preserves and reflects indecomposable objects.
\end{lemma}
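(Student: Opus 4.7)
The plan is to verify the functor is well-defined, then prove full faithfulness essentially by direct calculation, and finally deduce the indecomposability statement formally by transferring idempotent endomorphisms. Denote the functor by $F$, so $F(V\into W)=(V\into B\tensor_C W)$.

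First, I would check well-definedness. Since $W$ is finitely generated projective over $C$, $B\tensor_C W$ is finitely generated projective over $B$. Because $W$ is $C$-flat and $C\into B$ is injective, the canonical map $W\to B\tensor_C W$, $w\mapsto 1\tensor w$, is injective, so we may view $V\subseteq W\subseteq B\tensor_C W$ as an $A$-submodule. Finally, $B\cdot V=B\cdot(CV)=B\cdot W=B\tensor_C W$, so the spanning axiom holds. On morphisms, $F$ sends $\varphi\colon W_1\to W_2$ to $1\tensor\varphi$, which is $B$-linear and carries $V_1$ into $V_2$.

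For faithfulness, if $1\tensor\varphi=0$ then its restriction to $1\tensor W_1\cong W_1$ vanishes, so $\varphi=0$. For fullness, take a morphism $\psi\colon B\tensor_C W_1\to B\tensor_C W_2$ with $\psi(V_1)\subseteq V_2$. The key observation is that, viewing $W_1$ inside $B\tensor_C W_1$, one has $W_1=CV_1$, so
\[
\psi(W_1)=\psi(CV_1)=C\psi(V_1)\subseteq CV_2=W_2.
\]
Thus $\psi$ restricts to a $C$-linear map $\varphi\colon W_1\to W_2$ with $\varphi(V_1)\subseteq V_2$, i.e.\ a morphism of $(A\into C)$-modules. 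Since $B\tensor_C W_1$ is generated as a $B$-module by $1\tensor W_1$, and both $\psi$ and $1\tensor\varphi$ are $B$-linear and agree on this generating set, $\psi=1\tensor\varphi$.

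For the statement about indecomposables, preservation is immediate because $B\tensor_C-$ respects direct sums: a decomposition $(V\into W)=(V_1\into W_1)\oplus(V_2\into W_2)$ gives $(V\into B\tensor_C W)=(V_1\into B\tensor_C W_1)\oplus(V_2\into B\tensor_C W_2)$. For reflection, I would translate direct-sum decompositions into idempotent endomorphisms. A nontrivial decomposition of $F(V\into W)$ produces an idempotent $e\in\End_{(A\into B)}(V\into B\tensor_C W)$ with $e\ne 0,1$. By fullness, $e=1\tensor\varphi$ for some $\varphi\in\End_{(A\into C)}(V\into W)$; by faithfulness, $\varphi^2=\varphi$ and $\varphi\ne 0,1$. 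The idempotent $\varphi$ then splits $(V\into W)$ nontrivially, so $(V\into W)$ is decomposable.

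The only technical step that could trip one up is the fullness argument: one really needs the spanning identity $CV=W$ to conclude that $\psi$ sends the $C$-submodule $W_1$ into $W_2$ (together with the injectivity of $W_i\hookrightarrow B\tensor_C W_i$, which is where the projectivity hypothesis is used). Once those are in place, everything else is formal.
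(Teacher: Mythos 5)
Your argument is correct, and there is nothing in the paper to compare it against: the paper states this lemma as background, quoted from \cite{rW89}, Lemma~2.4, without reproducing a proof. Your write-up supplies the standard verification. All the essential points are in place: injectivity of $W\to B\tensor_C W$ (via flatness of the projective $C$-module $W$ applied to $C\into B$), which is what makes $(V\into B\tensor_C W)$ a legitimate pair and makes faithfulness work; the spanning identity $CV=W$, which is exactly the point in fullness ($\psi(W_1)=\psi(CV_1)=C\psi(V_1)\subseteq CV_2=W_2$, after which $B$-linearity and generation of $B\tensor_C W_1$ by $1\tensor W_1$ force $\psi=1\tensor\varphi$); and the transfer of idempotents across a fully faithful functor. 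The one step you assert without verification is that an idempotent $\varphi\in\End_C(V,W)$ with $\varphi\ne 0,1$ really splits the pair; this deserves a line, namely that $\varphi W$ is again finitely generated projective (a summand of $W$), $\varphi V$ and $(1-\varphi)V$ are $A$-submodules with $V=\varphi V\oplus(1-\varphi)V$, and $C\varphi V=\varphi(CV)=\varphi W$, so both summands are again $(A\into C)$-modules. It is routine, but it is where the specific axioms of modules over an Artinian pair enter.

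One cosmetic slip: you have the words \emph{preserves} and \emph{reflects} interchanged. Additivity of $B\tensor_C-$ shows that a nontrivial decomposition of $(V\into W)$ induces one of its image (noting that the functor kills no nonzero object, since $W\into B\tensor_C W$); contrapositively, indecomposability of the image forces indecomposability of the source, which is \emph{reflection}. Your idempotent-lifting argument shows the converse, that an indecomposable $(V\into W)$ has indecomposable image, which is \emph{preservation} --- and that is the direction actually used in Proposition~\ref{P:prop}, where indecomposables are pushed from $(A\into C)$ up to $(A\into B)$. Since you prove both implications, the lemma is fully established; just swap the labels.
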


\section{Indecomposable maximal Cohen-Macaulay modules of a given rank} \label{S:main}

It is useful to reduce the construction of indecomposable modules over $R$ to the construction of indecomposable modules over the Artinian pair $R_\text{art}$. The following proposition allows one to replace the Artinian pair $R_\text{art}=(A\into B)$ with the Artinian pair $k\into D$, where $k$ is a field and $D$ is a finite-dimensional $k$-algebra satisfying certain conditions.

\begin{proposition}\label{P:prop}
Let $(R,\m,k)$ be a one-dimensional analytically unramified local ring with minimal prime ideals $P_1$, \ldots, $P_s$. Assume $R/P_1$ has infinite Cohen-Macaulay type. Let $R_{\normalfont\text{art}}=(A\into B)$ be the corresponding Artinian pair. Then there is a ring $C$ between $A$ and $B$ such that, reducing modulo the maximal ideal of $A$, we have
\[
\xymatrix{A\ar@{^{(}->}[r] \ar@{->>}[d]& C\ar@{->>}[d]\\
k\ar@{^{(}->}[r] & D,}
\]
where $D=D_1\times \cdots \times D_s$ is a product of finite-dimensional $k$-algebras and either
\[
\dim_k D_1\ge 4 \quad \textrm{ or } \quad D_1\cong
k[X,Y]/(X^2,XY,Y^2).
\]
Moreover, if there exists an indecomposable $(k\into D)$-module $V\into W$, where $W=D_1^{(r_1)}\times \cdots \times D_s^{(r_s)}$, then there exists an indecomposable maximal Cohen-Macaulay $R$-module of rank $(r_1,\ldots, r_s)$.
\end{proposition}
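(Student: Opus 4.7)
The plan is to construct the intermediate ring $C$ explicitly, and then transport indecomposable $(k\into D)$-modules back up to $R$ through the Artinian-pair functors of Section~\ref{S:background}. Writing $R_{\text{art}}=(A\into B)$ with $B=B_1\times\cdots\times B_s$ decomposing according to the minimal primes (as in the setup of Proposition~\ref{P:local}), I will choose $C$ to contain the primitive idempotents $e_1,\ldots,e_s$ of $B$, so that $C=C_1\times\cdots\times C_s$ with $C_i\subseteq B_i$, and consequently $D=C/\n C=D_1\times\cdots\times D_s$ with $D_i=C_i/\n C_i$. Only the choice of $C_1$ is delicate; for $i\ge 2$ I simply set $C_i=B_i$, giving $D_i=B_i/\n B_i$.

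The choice of $C_1$ is dictated by which Drozd-Ro{\u\i}ter condition fails for $R/P_1$. Since $R/P_1$ has infinite Cohen-Macaulay type, its associated Artinian pair violates either (\textbf{dr1}) or (\textbf{dr2}). If (\textbf{dr1}) fails then $\dim_k B_1/\n B_1\ge 4$, so take $C_1=B_1$ and obtain $\dim_k D_1\ge 4$. If (\textbf{dr1}) holds but (\textbf{dr2}) fails, then $\dim_k B_1/\n B_1\le 3$ while $\dim_k(\n B_1+A)/(\n^2 B_1+A)\ge 2$; picking $x,y\in\n B_1$ whose images are $k$-linearly independent in this quotient, I set $C_1$ equal to the $A$-subalgebra of $B_1$ generated by $x$ and $y$. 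The plan is to show that this forces $\bar x,\bar y\in D_1$ to span a two-dimensional nilpotent ideal with vanishing square, yielding $D_1\cong k[X,Y]/(X^2,XY,Y^2)$.

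For the lifting assertion, suppose $V\into W$ is an indecomposable $(k\into D)$-module with $W=D_1^{(r_1)}\times\cdots\times D_s^{(r_s)}$. The ideal $\n C$ is nilpotent in $C$ and meets $A$ in $\n$, so Lemma~\ref{L:lemma2} (being surjective on isomorphism classes and reflecting indecomposables) supplies an indecomposable $(A\into C)$-module $\tilde V\into \tilde W$ mapping to $V\into W$. Since the indecomposable projective $C$-modules are exactly $C_1,\ldots,C_s$, lifting idempotents and comparing ranks after reducing mod $\n C$ forces $\tilde W\cong C_1^{(r_1)}\times\cdots\times C_s^{(r_s)}$. Then Lemma~\ref{L:lemma3}, applied to $A\into C\into B$, produces an indecomposable $(A\into B)$-module with underlying $B$-module $B\tensor_C\tilde W\cong B_1^{(r_1)}\times\cdots\times B_s^{(r_s)}$, and Proposition~\ref{P:local} delivers the desired indecomposable maximal Cohen-Macaulay $R$-module of rank $(r_1,\ldots,r_s)$.

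The main obstacle is the (\textbf{dr2})-failure case: one must verify that $C_1=A[x,y]\subseteq B_1$ really does reduce to $k[X,Y]/(X^2,XY,Y^2)$, i.e., that the relations $\bar x^2=\bar x\bar y=\bar y^2=0$ hold in $D_1$ and that $\bar x,\bar y$ remain $k$-linearly independent modulo $k\cdot 1$. These facts must be extracted from the linear-independence of $x,y$ in $(\n B_1+A)/(\n^2 B_1+A)$ together with the constraint $\dim_k B_1/\n B_1\le 3$, which in turn controls the possible structure of $B_1$ (a finite product of truncated polynomial rings, since $\overline{R/P_1}$ is a semilocal Dedekind domain). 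The characteristic-dependent subtleties alluded to in the introduction---particularly in multiplicity three and characteristic two---are expected to enter at this juncture.
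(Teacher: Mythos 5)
Your lifting half is exactly the paper's route and is sound: reduce along the nilpotent ideal $\n C$ via Lemma~\ref{L:lemma2}, extend along $A\into C\into B$ via Lemma~\ref{L:lemma3}, and finish with Proposition~\ref{P:local}. One small repair: the indecomposable projective $C$-modules are \emph{not} $C_1,\ldots,C_s$ in general, since $\overline{R}_i$ is only semilocal and so $C_1$ and the $B_i$ may themselves be products of local rings; but because $W=D_1^{(r_1)}\times\cdots\times D_s^{(r_s)}$ has constant rank $r_i$ over every local factor of $D_i$, the lifted projective is still forced to be $C_1^{(r_1)}\times B_2^{(r_2)}\times\cdots\times B_s^{(r_s)}$, so this is cosmetic.

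The genuine gap is your choice $C_1=A[x,y]$ in the (\textbf{dr2})-failure case, and it is not just an unfinished verification: the claimed relations $\bar{x}^2=\bar{x}\bar{y}=\bar{y}^2=0$ in $D_1=C_1/\n C_1$ do not follow. From $x,y\in\n B_1$ you only get $x^2,xy,y^2\in\n^2 B_1=\n\cdot(\n B_1)$, and to conclude membership in $\n C_1$ you would need $\n B_1\subseteq C_1$: writing $x=\sum_i n_ib_i$ with $n_i\in\n$ gives $x^2=\sum_i n_i(b_ix)$, where $b_ix\in B_1$ need not lie in the subalgebra generated by $x$ and $y$. So a priori $\bar{x}^2$ can be a nonzero combination of $1,\bar{x},\bar{y}$, and $D_1$ can be three-dimensional without being $k[X,Y]/(X^2,XY,Y^2)$ (e.g.\ of the form $k[X]/(X^3)$) --- exactly the dichotomy the proposition must rule out, since the Main Theorem's constructions cannot handle such a three-dimensional algebra. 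The paper's (Wiegand's) fix is to take \emph{all} of the ideal: with $\tilde{C}_1=\tilde{A}_1+\n\tilde{B}_1$ one has $(\n\tilde{B}_1)^2=\n\cdot(\n\tilde{B}_1)\subseteq\n\tilde{C}_1$ automatically, so the image $I$ of $\n\tilde{B}_1$ in $\tilde{D}_1$ is a square-zero ideal with $\tilde{D}_1=k\cdot 1\oplus I$; failure of (\textbf{dr2}) forces $\dim_k I\ge 2$, and then $\dim_k I=2$ yields $k[X,Y]/(X^2,XY,Y^2)$ while $\dim_k I\ge 3$ yields $\dim_k\tilde{D}_1\ge 4$ (so your claim that this case always produces $k[X,Y]/(X^2,XY,Y^2)$ is too strong even for the right $C_1$). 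A second, smaller omission: the Drozd--Ro{\u\i}ter conditions fail for the Artinian pair of $R_1$ formed with $R_1$'s \emph{own} conductor $\cond_1$, which may properly contain $\cond\overline{R}_1$; the paper therefore builds $\tilde{C}_1$ between $\tilde{A}_1=R_1/\cond_1$ and $\tilde{B}_1=\overline{R}_1/\cond_1$ and pulls it back to a ring $C_1$ between $A_1$ and $B_1$ along $B_1\onto\tilde{B}_1$ (with $C_1/\f=\tilde{C}_1$, $\f=\cond_1/\cond\overline{R}_1$), checking $D_1\cong\tilde{D}_1$. Your direct use of the pair coming from $\cond$ happens to compute the same invariants because $\f\subseteq A_1\cap\n B_1$, but that equality is a step that must be proved or bypassed by working upstairs as the paper does.
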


\begin{proof}
Let $\cond$ be the conductor of $R$, and let $R_i$ denote the integral domain $R/P_i$ for $i\in \{1,\ldots,s\}$. Consider the conductor square of $R$:
\[
\xymatrix{ R \ar[d] \ar@{^{(}->}[r] & \ds R_1\times \cdots \times
R_s\ar@{^{(}->}[r] & \overline{R}_1\times \cdots \times
\overline{R}_s=\overline{R}\ar[d]
\\ **[l] A=\ds\frac{R}{\cond}
\ar[r] \ar@{^{(}->}[r] & \ds \frac{R_1}{\cond R_1}\times \cdots
\times \frac{R_s}{\cond R_s} \ar@{^{(}->}[r] & \ds B_1\times \cdots
\times B_s=\frac{\overline{R}}{\cond}=B, }
\]
where $B_i=\overline{R}_i/\cond \overline{R}_i$, $i\in\{1,\ldots,s\}$. Now consider the conductor square for $R_1=R/P_1$, and let $\cond_1$ denote the conductor of $R_1$ (note that $\cond_1\supseteq \cond\overline{R}_1=\cond R_1$):
\[
\xymatrix{ R_1 \ar[d] \ar@{^{(}->}[r] & \overline{R}_1\ar[d]
\\ **[l] A_1=\ds\frac{R_1}{\cond \overline{R}_1}
\ar[r] \ar@{^{(}->}[r] \ar[d] & **[r]
\displaystyle{\frac{\overline{R}_1}{\cond \overline{R}_1}}=B_1 \ar[d]\\
**[l] \tilde{A}_1=\ds \frac{R_1}{\cond_1} \ar[r] \ar@{^{(}->}[r] &
**[r] \displaystyle{\frac{\overline{R}_1}{\cond_1}}=\tilde{B}_1. }
\]
Since $\overline{R}$ is semilocal and integrally closed, $\overline{R}$ is a principal ideal ring. Hence $\overline{R}_1$ is a principal ideal domain, and so $\tilde{B}_1$ is a principal ideal
ring. Also, note that $\tilde{A}_1$ is local with maximal ideal $\n$. Since $R_1$ has infinite Cohen-Macaulay type, the Drozd-Ro{\u\i}ter conditions fail for the corresponding Artinian
pair $\tilde{A}_1\into \tilde{B}_1$. By Proposition~2.3 in~\cite{rW89}, there is a ring $\tilde{C}_1$ between $\tilde{A}_1$ and $\tilde{B}_1$ such that, setting $\tilde{D}_1=\tilde{C}_1/\n\tilde{C}_1$, we have either
\begin{equation}\label{E:eqn10}
\dim_k \tilde{D}_1\ge 4 \quad \textrm{ or } \quad \tilde{D}_1\cong
k[X,Y]/(X^2,XY,Y^2).
\end{equation}
(If (\textbf{DR1}) fails, then $\dim_k (\tilde{B}_1/\n \tilde{B}_1)\ge 4$ and we take $\tilde{C}_1=\tilde{B}_1$; otherwise (\textbf{DR2}) fails and we take $\tilde{C}_1=\tilde{A}_1+\n\tilde{B}_1$.)

Now let $\f= \cond_1/\cond\overline{R}_1$ denote the kernel of the lower-right vertical map in the diagram above, and let $C_1$ be the ring between $A_1$ and $B_1$ such that $C_1/\f=\tilde{C}_1$. Finally, set $D_1=C_1/\n C_1$, and observe that $D_1$ and $\tilde{D}_1$ are naturally isomorphic.

Then $C=C_1\times B_2\times \cdots\times B_s$ is a ring between $A$ and $B$. Reducing modulo the maximal ideal~$\n$ of $A$, we have
\[
\xymatrix{ A \ar@{^{(}->}[r]\ar@{->>}[d] & C \ar@{->>}[d]\\
k\ar@{^{(}->}[r] & D,}
\]
where $D=D_1\times D_2\times \cdots \times D_s$, $D_1=C_1/\n C_1$ and $D_i=B_i/\n B_i$ for $i\in\{2,\ldots,s\}$. In particular, by~\eqref{E:eqn10}, we have either $\dim_k D_1\ge 4$ or $D_1\cong k[X,Y]/(X^2,XY,Y^2)$.

By Lemma~\ref{L:lemma2}, if there exists an indecomposable $(k\into D)$-module $V_1\into W_1$ with $W_1=D_1^{(r_1)}\times \cdots \times D_s^{(r_s)}$, then there exists an indecomposable $(A\into C)$-module $V_2 \into W_2$ with $W_2= C_1^{(r_1)}\times
B_2^{(r_2)}\times\cdots \times B_s^{(r_s)}$. Hence, by Lemma~\ref{L:lemma3}, there exists an indecomposable $(A\into B)$-module $V\into W$ with $W=B_1^{(r_1)}\times \cdots \times
B_s^{(r_s)}$. By Proposition~\ref{P:local}, there exists an indecomposable maximal Cohen-Macaulay $R$-module of rank $(r_1, \ldots, r_s)$.
\end{proof}

The following remark will be useful in the proof of the main theorem.

\begin{remark}\label{R:remark1}
Let $k$ be a field, and let $D$ be a finite-dimensional $k$-algebra with $\dim_k D\ge 4$. The following are all the cases that can arise (see the proof of~\cite[Proposition~2.6]{rW89} and~\cite[pages 5-7]{KrW}).
\begin{enumerate}
\item There exists an element $a\in D$ such that $\{1,a,a^2\}$ is linearly independent over
$k$.
\item For all $a\in D$, $\{1,a,a^2\}$ is linearly dependent over $k$. In this case, one of the following holds.
\begin{enumerate}
\item There exist elements $a$, $b\in D$ such that $\{1,a,b\}$ is linearly independent over $k$ and $a^2=ab=b^2=0$.
\item There exist elements $a$, $b\in D$ such that $\{1,a,ab,b\}$ is linearly independent over $k$ and $a^2=b^2=0$.
\item The characteristic of $k$ is $2$, $(D,\m_D,K_D)$ is a local ring, $K_D$ is a purely inseparable extension of $k$ of degree at least four and $a^2\in k$ for all $a\in K_D$.
\item The field $k$ has cardinality $2$ and $D=D_1\times \cdots\times D_l$, where $l\ge 4$ and each $(D_i,\m_i)$ is an Artinian local ring such that $D_i/\m_i=k$.
\end{enumerate}
\end{enumerate}
Observe that in case (1), there are elements $a$, $b\in D$ such that $\{1,a,a^2,b\}$ is linearly independent over $k$. In case (2c), there are elements $a$, $b\in K_D$ such that $\{1,a,ab,b\}$ is
linearly independent over $k$ and $a^2$, $b^2\in k$ (indeed, choose $a$, $b$ such that $[k(a,b):k]=4$).
\end{remark}

\subsection{Building modules over Artinian pairs}

Let $k$ be a field, and let $D=D_1\times \cdots\times D_s$, where each $D_i$ is a finite-dimensional $k$-algebra. Assume $\dim_k D_1\ge 3$. Let $(r_1,\ldots, r_s)$ be a non-trivial $s$-tuple of non-negative integers with $r_1\ge r_i$ for all $i\in \{1,\ldots, s\}$. We shall build explicit $(k\into D)$-modules $V\into W$, where $W=D_1^{(r_1)}\times\cdots\times D_s^{(r_s)}$. The first
construction is a generalization of constructions found in~\cite[Construction~2.5]{rW89} and~\cite[Lemma~2.2]{rW01}. The second construction is a generalization of Dade's construction (see~\cite{D63} and~\cite[Theorem~2.9]{rW89}).

\begin{construction}\label{construction1}
Let $W=D_1^{(r_1)}\times\cdots\times D_s^{(r_s)}$, and let $k^{(r_1)}$ be the $k$-vector space of $r_1\times 1$ column vectors. Let $\partial\colon k^{(r_1)}\to W$ be the ``truncated diagonal
map'', given by:
\[
\begin{bmatrix}
c_1\\ \vdots\\ c_{r_1}
\end{bmatrix} \mapsto \left(\begin{bmatrix}
c_1\\ \vdots\\ c_{r_1}
\end{bmatrix}, \begin{bmatrix}
c_1\\ \vdots\\ c_{r_2}
\end{bmatrix}, \ldots, \begin{bmatrix}
c_1\\ \vdots\\ c_{r_s}
\end{bmatrix}\right).
\]
Choose elements $a_1$, $b_1\in D_1$ such that $\{1,a_1,b_1\}$ is linearly independent over $k$. Let $a=(a_1,0,\ldots, 0)$ and $b=(b_1, 0,\ldots, 0)$ be elements in $D$. For $i\in \{1,\ldots, s\}$, let $H_i$ be the $r_i\times r_i$ nilpotent matrix with $1$ on the superdiagonal and $0$ elsewhere.

For $t\in k$, let $V_t$ be the $k$-subspace of $W$ consisting of elements of the form
\[
\partial(x)+(a+tb)\partial(y)+b\partial(H_1y),
\]
where $x$ and $y$ range over $k^{(r_1)}$. Let $e_{r_j,i}\in D_j^{(r_j)}$ denote the standard unit vector in $D_j^{(r_j)}$ with $1$ in the $i$-th position. We see that every element in $W$ is a
$D$-linear combination of elements of the form $e_W=(0,\ldots, e_{r_j,i}, 0,\ldots,0)$, where $i\in \{1,\ldots,r_j\}$ and $j\in \{1,\ldots,s\}$. Note that we can consider $e_{r_j,i}\in k^{(r_j)}$ as an element of $k^{(r_1)}$ (by appending zeros if $r_1>r_i$). Let $\varepsilon_j\in D$ be an idempotent with support $\{j\}$. Then $e_W=\varepsilon_j\partial(e_{r_j,i})\in DV_t$. Hence $V_t\into W$ is a $(k\into D)$-module with $W=D_1^{(r_1)}\times\cdots\times D_s^{(r_s)}$.
\end{construction}

\begin{construction}\label{construction2}
Assume $D_1=D_{1,1}\times D_{1,2}\times \cdots \times D_{1,l}$, where $l\ge 4$ and each $(D_{1,i},\m_{1,i})$ is an Artinian local ring such that $D_{1,i}/\m_{1,i}=k$.

Let $I=(\m_{1,1}\times \cdots \times \m_{1,l})\times (0)\times \cdots \times (0)$, and let $E=D/I=(k\times \cdots\times k)\times D_2\times \cdots \times D_s$. We construct a $(k\into E)$-module as
follows. For $i\in \{2,\ldots, s\}$, let $\partial_i\colon k^{(r_1)}\to D_i^{(r_i)}$ denote the map
\[
\begin{bmatrix}
c_1\\ \vdots\\ c_{r_1}
\end{bmatrix} \mapsto \begin{bmatrix}
c_1\\ \vdots\\ c_{r_i}
\end{bmatrix}.
\]
Let $W=(k^{(r_1)}\times \cdots\times k^{(r_1)})\times D_2^{(r_2)}\times \cdots\times D_s^{(r_s)}$. Let $V$ be the $k$-subspace of $W$ consisting of elements of the form
\[
(x,y,x+y,x+H_1y,x,\ldots,x,\partial_2(x),\ldots,\partial_s(x)),
\]
where $x$ and $y$ range over $k^{(r_1)}$. Note that $V$ contains $(x,0,x,x,x,\ldots,x,\partial_2(x),\ldots,\partial_s(x))$ for all $x\in k^{(r_1)}$, and $(0,y,y,H_1y,0,\ldots,0)$ for all $y\in k^{(r_1)}$. Using idempotents of $E$, we see that $EV=W$. Hence $V\into W$ is a $(k\into E)$-module with $W=(k^{(r_1)}\times \cdots\times k^{(r_1)})\times D_2^{(r_2)}\times \cdots\times D_s^{(r_s)}$.
\end{construction}

\subsection{Main Result}
Using the constructions described above, we now state and prove the main theorem.

\begin{main}\label{T:main}
Let $(R,\m,k)$ be a one-dimensional analytically unramified local ring with minimal prime ideals $P_1$, \ldots, $P_s$. Assume $R/P_1$ has infinite Cohen-Macaulay type. Let $(r_1, \ldots, r_s)$ be a non-trivial $s$-tuple of non-negative integers with $r_1\ge r_i$ for all $i\in \{1,\ldots, s\}$.
\begin{enumerate}
\item There exists an indecomposable maximal Cohen-Macaulay $R$-module of rank $(r_1,
\ldots, r_s)$.
\item If the residue field $k$ is infinite, then the set of isomorphism classes of indecomposable maximal Cohen-Macaulay $R$-modules of rank $(r_1, \ldots, r_s)$ has cardinality $|k|$.
\end{enumerate}
\end{main}

\begin{proof}
With the notation as in Proposition~\ref{P:prop}, it is enough to construct the desired modules over the Artinian pair $k\into D$, where $D=D_1\times \cdots \times D_s$ and either $\dim_k D_1\ge 4$
or $D_1\cong k[X,Y]/(X^2,XY,Y^2)$.

Let $W=D_1^{(r_1)}\times\cdots\times D_s^{(r_s)}$. Choose $a_1$, $b_1\in D_1$ such that $\{1,a_1,b_1\}$ is linearly independent over $k$, and let $V_t\into W$ be the $(k\into D)$-module given in Construction~\ref{construction1}.

Observe that, when $k$ is infinite, the set $\{1,a_1,b_1,(a_1+tb_1)^2\}$ is linearly independent over $k$ for all but finitely many $t\in k$; moreover, for a fixed $t$, the set $\{1,a_1,b_1,(a_1+tb_1)(a_1+ub_1)\}$ is linearly independent over~$k$ for all but finitely many $u\in k$. We show the following:
\begin{enumerate}
\item [(i)] there is $t\in k$ such that $V_t\into W$ is indecomposable; moreover, if $k$ is infinite, then $V_t\into W$ is indecomposable for all but finitely many $t$;
\item [(ii)] if $u\ne t$ and both $\{1,a_1,b_1,(a_1+tb_1)^2\}$ and $\{1,a_1,b_1,(a_1+tb_1)(a_1+ut_1)\}$ are linearly independent over~$k$, then $V_t\into W$ is not isomorphic to $V_u\into W$. By the previous observation, when $k$ is infinite, there are $|k|$ pairwise non-isomorphic indecomposable $(k\into D)$-modules.
\end{enumerate}

Consider a morphism $\varphi\colon(V_t\into W)\to (V_u\into W)$. Since $\varphi(V_t)\subseteq V_u$ and $\partial(x)\in V_t$ for all $x\in k^{(r_1)}$, we can write
\[
\varphi(\partial(x))=\partial(x')+(a+ub)\partial(x'')+b\partial(H_1x'')
\]
for elements $x'$, $x''\in k^{(r_1)}$. By linear independence of $\{1,a_1, b_1\}$ over $k$, the elements $x'$ and $x''$ are uniquely and linearly determined by $x$. Thus there exist $r_1\times r_1$ matrices $\sigma$ and $\tau$ with entries in $k$ such that
\begin{equation}\label{E:eqn5}
\varphi(\partial(x))=\partial(\sigma x)+(a+ub)\partial(\tau x)+b\partial(H_1\tau x)
\end{equation}
for all $x\in k^{(r_1)}$. Similarly, there are $r_1\times r_1$ matrices $\mu$ and $\rho$ with entries in $k$ such that
\begin{equation}\label{E:eqn6}
\varphi((a+tb)\partial(y)+b\partial(H_1y))=\partial(\mu y)+(a+ub)\partial(\rho y)+b\partial(H_1\rho y)
\end{equation}
for all $y\in k^{(r_1)}$. Since $\varphi$ is $D$-linear, we can rewrite~\eqref{E:eqn6} as:
\begin{align}
-\partial(\mu y)+a\partial((\sigma-\rho)y)&+b\partial((t\sigma -u\rho+\sigma H_1 - H_1\rho)y)+ (a+tb)(a+ub)\partial(\tau y)\nonumber
\\ &+ab\partial((H_1\tau+\tau H_1)y)+b^2\partial((tH_1\tau +u\tau H_1 +H_1\tau H_1)y)=0\label{E:eqn7}
\end{align}
for all $y\in k^{(r_1)}$. Moreover, if $u=t$ and $\varphi$ is an idempotent, then the equation
$\varphi^2(\partial(x))=\varphi(\partial(x))$ gives:
\begin{align}
\partial(\sigma^2x)&+a\partial((\tau\sigma+\sigma\tau) x)+b\partial((t(\tau\sigma+\sigma\tau)+H_1\tau\sigma+\sigma H_1\tau) x)+ (a+tb)b\partial((H_1\tau^2 + \tau H_1\tau )x)\nonumber\\
&+(a+tb)^2\partial(\tau^2 x)+b^2\partial(H_1\tau H_1\tau x) =\partial(\sigma x)+a\partial(\tau x)+ b\partial((tI+H_1)\tau x)
\label{E:eqn11}
\end{align}
for all $x\in k^{(r_1)}$.

First suppose $\{1,a_1,b_1\}$ is linearly independent over $k$ and $a_1^2=a_1b_1=b_1^2=0$. (Note that if $D_1\cong k[X,Y]/(X^2,XY,Y^2)$, then this hypothesis is clearly satisfied.)
Equation~\eqref{E:eqn7} simplifies to
\begin{equation}\label{E:eqn21}
-\partial(\mu y)+a\partial((\sigma-\rho)y)+b\partial((t\sigma -
u\rho+\sigma H_1 - H_1\rho)y) =0.
\end{equation}
Using linear independence of $\{1,a_1,b_1\}$ over $k$, we obtain the equations
\[
\mu=0, \qquad \sigma=\rho, \qquad t\sigma-u\rho+\sigma H_1 - H_1\rho=0,
\]
and hence
\begin{equation}\label{E:eqn8}
\sigma[(t-u)I+H_1]=H_1\sigma.
\end{equation}
If $\varphi$ is an isomorphism, then from equation~\eqref{E:eqn5} we see that $\sigma$ is invertible (use the facts that $a_1$, $b_1$ are nilpotent elements and that if $\alpha=\beta+\gamma$ for a unit $\alpha$ and a nilpotent element $\gamma$, then $\beta$ is a unit).
This is a contradiction to equation~\eqref{E:eqn8} if $u\ne t$. Hence, if $u\ne t$, then $(V_t\into W)\not\cong (V_u\into W)$.

To show that $V_t\into W$ is indecomposable, take $u=t$ and assume $\varphi$ is an idempotent. Equation~\eqref{E:eqn11} simplifies to
\[
\partial(\sigma^2x)+a\partial((\tau\sigma+\sigma\tau) x)+b\partial((t(\tau\sigma+\sigma\tau)+H_1\tau\sigma+\sigma H_1\tau) x)=\partial(\sigma x)+a\partial(\tau x)+ b\partial((tI+H_1)\tau x).
\]
Comparing the coefficients of $1$ and $a$, we have $\sigma^2=\sigma$ and $\tau \sigma+\sigma\tau=\tau$. Since $u=t$, equation~\eqref{E:eqn8} yields $\sigma H_1=H_1\sigma$. Thus $\sigma$
is either zero or the identity, and, in either cases, $\tau=0$. Equation~\eqref{E:eqn5} shows that $\varphi$ is either zero or the identity; i.e., $\varphi$ is a trivial idempotent. Hence $V_t\into W$
is an indecomposable $(k\into D)$-module with $W=D_1^{(r_1)}\times\cdots\times D_s^{(r_s)}$.

For the rest of the proof, assume $\dim_k D_1\ge 4$. By Remark~\ref{R:remark1}, we need to consider the following four cases.

\begin{description}
\item [\textbf{Case (1)}] $\{1,a_1,a_1^2,b_1\}$ is linearly independent over $k$.\\
In equation~\eqref{E:eqn7}, we use descending induction to show that $H_1^j\tau H_1^i=0$ for all $i$, $j\ge 0$ (cf. the proof of Theorem 1.4 in~\cite{KrW} or the proof of Lemma 2.2 in~\cite{rW01}). Note that $H_1^j\tau H_1^i=0$ for $i\ge r_1$ and $j\ge r_1$. Thus we may assume inductively that $H_1^{j+1}\tau H_1^i=0$ and $H_1^j\tau H_1^{i+1}=0$. Set $H=(H_1,\ldots, H_s)\in \End_D(W)$, and observe that $H\partial (y)=\partial(H_1 y)$ for all $y\in k^{(r_1)}$. Replace $y$ by $H_1^i y$ in~\eqref{E:eqn7}, apply $H^j$ to both sides of the resulting equation and use the inductive hypothesis to obtain
\begin{align*}
-\partial(H_1^j\mu H_1^i y)+a\partial(H_1^j(\sigma-\rho)H_1^iy)+b\partial(H_1^j(t\sigma
-u\rho&+\sigma H_1 - H_1\rho)H_1^iy)\\&+ (a+tb)(a+ub)\partial(H_1^j\tau H_1^i y)=0.
\end{align*}
If $\{1,a_1,b_1,(a_1+tb_1)(a_1+ub_1)\}$ is linearly independent over $k$, then we conclude that $H_1^j\tau H_1^i=0$ for all $i$, $j\ge 0$. Setting $i=j=0$, we obtain $\tau=0$.

Thus equation~\eqref{E:eqn7} simplifies to equation~\eqref{E:eqn21}, and by linear independence of $\{1,a_1,b_1\}$ over $k$, we obtain equation~\eqref{E:eqn8}. As $\tau=0$, from equation~\eqref{E:eqn5} we have $\varphi(\partial(x))=\partial(\sigma x)$; thus, if
$\varphi$ is an isomorphism, then $\sigma$ is invertible, which contradicts equation~\eqref{E:eqn8} if $u\ne t$. Hence, if $u\ne t$ and $\{1,a_1,b_1,(a_1+tb_1)(a_1+ub_1)\}$ is linearly independent over $k$, then $(V_t\into W)\not \cong (V_u\into W)$.

To show that $V_t\into W$ is indecomposable, take $u=t$ and assume $\varphi$ is an idempotent. In equation~\eqref{E:eqn7}, use descending induction and the fact that $\{1,a_1,b_1,(a_1+tb_1)^2\}$ is linearly independent over $k$ to show that $\tau=0$. We obtain $\sigma^2=\sigma$ from equation~\eqref{E:eqn11}, and $\sigma H_1=H_1\sigma$ from equation~\eqref{E:eqn8}. Thus $\sigma$ is either zero or the identity, and we conclude that $\varphi$ is a trivial idempotent. Hence, if $\{1,a_1,b_1,(a_1+tb_1)^2\}$ is linearly independent over $k$, then $V_t\into W$ is an indecomposable $(k\into D)$-module with $W=D_1^{(r_1)}\times\cdots\times D_s^{(r_s)}$.

\item [\textbf{Case (2b)}] $\{1,a_1,a_1b_1,b_1\}$ is linearly
independent over $k$ and $a_1^2=b_1^2=0$.\\ Equation~\eqref{E:eqn7} simplifies to
\begin{equation}\label{E:eqn}
-\partial(\mu y)+a\partial((\sigma-\rho)y)+b\partial((t\sigma - u\rho+\sigma H_1 - H_1\rho)y)+ab\partial(((u+t) \tau + H_1\tau+\tau H_1)y)=0.
\end{equation}
Assuming $u\ne -t$, use descending induction in the equation above and the fact that $\{1,a_1,a_1b_1,b_1\}$ is linearly independent over $k$ to show that $\tau=0$ (in particular, observe that $\tau =0$ if $u=t$ and $\chr k\ne2$). Proceed as in the previous case to conclude that if $u\ne \pm t$, then $(V_t\into W)\not \cong (V_u\into W)$.

To show that $V_t\into W$ is indecomposable, take $u=t$ and assume $\varphi$ is an idempotent. If $\chr k\ne 2$, then $\tau=0$ as previously observed. If $\chr k=2$, then from equation~\eqref{E:eqn} we see that $\sigma H_1=H_1\sigma$ and $\tau H_1=H_1\tau$. Thus $\sigma$, $\tau\in k[H_1]$, and in particular $\sigma\tau=\tau\sigma$. Comparing the coefficients of $a$ in equation~\eqref{E:eqn11}, we conclude that $\tau=\sigma\tau+\tau\sigma=0$. In either case, $\tau =0$, and from equations~\eqref{E:eqn11} and~\eqref{E:eqn8}, we conclude that $\varphi$ is a trivial idempotent. Hence $V_t\into W$ is an indecomposable $(k\into D)$-module with $W=D_1^{(r_1)}\times\cdots\times D_s^{(r_s)}$.

\item [\textbf{Case (2c)}] The characteristic of $k$ is $2$, $(D_1,\m_1,K_1)$ is a local ring, $K_1$ is a purely inseparable extension of degree at least four and $a^2\in k$ for all $a\in K_1$.

Let $I=\m_1\times (0)\times \cdots \times (0)$, and pass to the Artinian pair $k\into D/I$, where $D/I=K_1\times D_2\times\cdots\times D_s$. By Lemma~\ref{L:lemma2}, it is enough to construct $|k|$ pairwise non-isomorphic indecomposable $(k\into D/I)$-modules $V\into W$, where $W=K_1^{(r_1)}\times D_2^{(r_2)}\times \cdots\times D_s^{(r_s)}$.

Choose $a_1$, $b_1\in K_1$ such that $\{1,a_1,a_1b_1,b_1\}$ is linearly independent over $k$, and construct a $(k\into D/I)$-module $V_t\into W$, where $W=K_1^{(r_1)}\times D_2^{(r_2)}\times \cdots\times D_s^{(r_s)}$, as in Construction~\ref{construction1}. Using linear independence of $\{1,a_1,a_1b_1,b_1\}$ over $k$ and the fact that $a_1^2$, $b_1^2\in k$, compare the coefficients of $a$, $b$ and $ab$ in equation~\eqref{E:eqn7} to obtain:
\begin{equation}\label{E:eqn9}
\sigma=\rho,\qquad  \sigma[(t-u)I+H_1]=H_1\sigma,  \qquad (t+u)\tau + H_1\tau+\tau H_1=0.
\end{equation}
If $u\ne t$, then $u+t\ne 0$ ($\chr k=2$), and, by descending induction, from the third equation in~\eqref{E:eqn9}, it follows that $\tau=0$. From equation~\eqref{E:eqn5}, we have $\varphi(\partial(x))=\partial(\sigma x)$; thus, if $\varphi$ is an isomorphism, then $\sigma$ is invertible, which contradicts the second equation in~\eqref{E:eqn9} if $u\ne t$. Hence, if $u\ne t$, then $(V_t\into W)\not\cong (V_u\into W)$.

To show $V_t\into W$ is indecomposable, take $u=t$ and assume $\varphi$ is an idempotent. From the second and third equations in~\eqref{E:eqn9}, it follows that $\sigma$, $\tau\in k[H_1]$; in particular, $\sigma\tau=\tau\sigma$. Comparing the coefficients of $1$ and $a$ in equation~\eqref{E:eqn11}, we see that $\sigma^2=\sigma$ and $\tau=\tau\sigma+\sigma\tau$. It follows that $\tau=0$. As $\sigma\in k[H_1]$, we conclude that $\sigma$ is either zero or the identity, and so is $\varphi$. Thus $\varphi$ is a trivial idempotent. Hence $V_t\into W$ is an indecomposable $(k\into
D/I)$-module with $W=K_1^{(r_1)}\times D_2^{(r_2)}\times \cdots\times D_s^{(r_s)}$.

\item [\textbf{Case (2d)}] The field $k$ has cardinality $2$ and $D_1=D_{1,1}\times\cdots\times D_{1,l}$, where $l\ge 4$ and each $(D_{1,i},\m_{1,i})$ is an Artinian local ring such that $D_{1,i}/\m_{1,i}=k$.

Let $W=(k^{(r_1)}\times \cdots\times k^{(r_1)})\times D_2^{(r_2)}\times \cdots\times D_s^{(r_s)}$, and let $V\into W$ be the $(k\into E)$-module given in Construction~\ref{construction2}. By Lemma~\ref{L:lemma2}, it is enough to show that $V\into W$ is indecomposable.

Let $\varphi\in \End_E(V,W)$ be an idempotent endomorphism. Write $\varphi=(\varphi_{1,1},\ldots,\varphi_{1,l},\varphi_2,\ldots,\varphi_s)$, where each $\varphi_{1,j}$, $j\in \{1,\ldots, l\}$, is an $r_1\times r_1$ matrix with entries in $k$ and each $\varphi_i$, $i\in \{2,\ldots, s\}$, is an $r_i\times r_i$ matrix with entries in $D_i$. Since $\varphi(V)\subseteq V$, there exist $r_1\times r_1$ matrices $\alpha$, $\beta$, $\gamma$ and $\delta$ with entries in $k$ such that
\begin{align}
\varphi(x,0,x,\ldots,x,\partial_2(x),&\ldots,\partial_s(x))\nonumber \\&=(\alpha x,\beta x, (\alpha+\beta) x,(\alpha+H_1\beta)x, \alpha x,\ldots,\alpha x,\partial_2(\alpha x),\ldots,\partial_s(\alpha
x))\label{E:eqn15}
\end{align}
and
\begin{equation*}
\varphi(0,y,y,H_1y,0,\ldots,0)=(\gamma y, \delta y, (\gamma+\delta)y,(\gamma+H_1\delta)y,\gamma y,\ldots,\gamma y,\partial_2(\gamma y), \ldots, \partial_s(\gamma y))
\end{equation*}
for all $x$, $y\in k^{(r_1)}$. Comparing components, we obtain equations which lead to the conditions:
\begin{enumerate}
\item [(i)] $\varphi_{1,j}=\alpha$ for all $j\in \{1,\ldots, l\}$, and
\item [(ii)] $\alpha H_1=H_1\alpha$.
\end{enumerate}
Since $\varphi$ is an idempotent, so is $\alpha$. As $\alpha H_1=H_1\alpha$, we conclude that $\alpha$ is either zero or the identity. From equation~\eqref{E:eqn15}, we see that if $\alpha$ is zero (respectively, the identity), then $\varphi_i$ is zero (respectively, the identity) for all $i\in\{2,\ldots,s\}$. Thus $\varphi$ is a trivial idempotent. Hence $V\into W$ is an indecomposable $(k\into E)$-module with $W=(k^{(r_1)}\times \cdots\times k^{(r_1)})\times D_2^{(r_2)}\times \cdots\times D_s^{(r_s)}$. \qedhere
\end{description}
\end{proof}

From the main theorem, we can deduce the structure of the monoid $\mcC(R)$ of isomorphism classes of maximal Cohen-Macaulay modules (together with $[0]$) when $\hat{R}/Q$ has infinite Cohen-Macaulay type for all minimal prime ideals $Q$ of $\hat{R}$. The following corollary is a special case of Theorem~3.15 in~\cite{BSunp}. Recall that the \textit{splitting number} $q$ of $R$ is defined by $q=|\minSpec(\hat{R})|-|\minSpec(R)|$, where $\minSpec(R)$ and $\minSpec(\hat{R})$ denote the set of minimal prime ideals of $R$ and $\hat{R}$ respectively.

\begin{corollary}
Let $(R,\m, k)$ be a one-dimensional analytically unramified local ring with splitting number $q$. Let $\hat{R}$ denote the $\m$-adic completion of $R$, and let $\Lambda$ denote the set of isomorphism classes of indecomposable maximal Cohen-Macaulay $\hat{R}$-modules. Assume $\hat{R}/Q$ has infinite Cohen-Macaulay type for all minimal prime ideals $Q$ of $\hat{R}$. 
\begin{enumerate}
\item If $q=0$, then $\mcC(R)\cong \mcC(\hat{R})\cong \N^{(\Lambda)}$.
\item If $q\ge 1$, then $\mcC(R)\cong \ker(\mcA(R))\cap \N^{(\Lambda)}$, where $\mcA(R)$ is a $q\times |\Lambda|$ integer matrix such that every element of $\Z^{(q)}$ occurs $|k|\cdot |\N|$
times as a column of $\mcA(R)$.
\end{enumerate}
\end{corollary}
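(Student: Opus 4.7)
The plan is to identify $\mcC(R)$ with the kernel of an integer matrix acting on $\mcC(\hat R) \cong \N^{(\Lambda)}$, and then to invoke the Main Theorem (applied to $\hat R$) to count how many indecomposable MCM $\hat R$-modules produce each possible column of $\mcA(R)$.

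Since $\hat R$ is complete, Krull-Remak-Schmidt gives $\mcC(\hat R) \cong \N^{(\Lambda)}$. Faithful flatness of $R \to \hat R$ makes $M \mapsto \hat M$ into an injective monoid homomorphism $\mcC(R) \hookrightarrow \mcC(\hat R)$, and by the standard pullback/descent characterization an $\hat R$-module $N$ lies in the image if and only if the rank function of $N$ is constant on each fiber of the natural map $\minSpec \hat R \to \minSpec R$. Enumerating the fiber over $P_i$ as $\{Q_{i,1},\ldots,Q_{i,n_i}\}$, the conditions $\rank_{\hat R_{Q_{i,j}}}(N)=\rank_{\hat R_{Q_{i,j+1}}}(N)$ for $1 \le j < n_i$ give $\sum_i (n_i-1) = q$ integer linear equations on the rank tuple, whose coefficient matrix is $\mcA(R)$. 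Thus $\mcC(R) \cong \ker(\mcA(R)) \cap \N^{(\Lambda)}$; when $q=0$ no equations remain and part (1) follows immediately.

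For part (2), fix $\vec{d} \in \Z^{(q)}$ and count how many $\lambda \in \Lambda$ produce the column $\vec{d}$. The requirement ``column of $\lambda$ equals $\vec{d}$'' amounts to a prescribed system of successive-difference equations on the rank tuple $(r_1,\ldots,r_t)$ of the corresponding indecomposable MCM $\hat R$-module; the solution set is a single coset under the global shift $(r_1+n,\ldots,r_t+n)$, $n \in \N$. For each sufficiently large $n$ the shifted tuple is non-negative, and since every $\hat R/Q$ has infinite Cohen-Macaulay type, we may permute the minimal primes of $\hat R$ so that the maximum entry falls at the distinguished coordinate. The Main Theorem applied to $\hat R$ then supplies $|k|$ pairwise non-isomorphic indecomposable MCM $\hat R$-modules of that rank when $k$ is infinite (by part (2)), or at least one when $k$ is finite (by part (1)). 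Combining the $|\N|$ admissible shifts with the per-shift counts yields the asserted total cardinality $|k|\cdot|\N|$ in both cases.

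The main technical obstacle is verifying that this count is an equality and not merely a lower bound: one must argue that every indecomposable MCM $\hat R$-module of the prescribed rank is realized by the Main Theorem's construction up to isomorphism, which is the substance of the general statement in~\cite{BSunp} that this corollary specializes. The remaining ingredient, namely the characterization of the image of $\mcC(R) \to \mcC(\hat R)$ via fiberwise rank constancy, is the standard conductor-square/pullback descent machinery that underlies Theorem~\ref{T:theorem1} and Theorem~\ref{T:theorem2} in the present setting.
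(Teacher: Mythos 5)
Your proposal is correct and follows essentially the same route as the paper's proof: Krull--Remak--Schmidt over $\hat{R}$ gives $\mcC(\hat{R})\cong \N^{(\Lambda)}$, the Levy--Odenthal extension criterion (which the paper invokes directly as \cite[Theorem~6.2]{LO96} rather than re-deriving via descent, as you sketch) realizes $\mcC(R)$ as $\ker(\mcA(R))\cap \N^{(\Lambda)}$ with columns given by successive rank differences over the split minimal primes, and the Main Theorem applied to suitably chosen and diagonally shifted rank tuples shows each element of $\Z^{(q)}$ occurs $|k|\cdot|\N|$ times. One minor slip worth noting: the set of rank tuples producing a fixed column is not a single coset under the global shift $(r_1+n,\ldots,r_t+n)$ (the base ranks $r_{i,1}$ can be varied independently at each minimal prime of $R$), but since you, like the paper, only use the $\N$-indexed family of diagonal shifts together with the $|k|$ pairwise non-isomorphic indecomposables per admissible rank, this inaccuracy does not affect the argument, and your deferral of the exact-count upper bound to \cite{BSunp} matches the paper's own level of detail.
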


\begin{proof}
If $q=0$, then the natural embedding $\mcC(R)\into \mcC(\hat{R})$ is surjective, and hence an isomorphism. 

Assume $q\ge 1$. Let $P_1$, \ldots, $P_s$ be the minimal prime ideals of $R$, and let $Q_{i,1}$,\ldots, $Q_{i,t_i}$ be the minimal prime ideals of $\hat{R}$ lying over the minimal prime ideal $P_i$ of $R$. Let $l\in \{0,\ldots, s-1\}$ denote the number of minimal prime ideals of $R$ with $t_i=1$. After renumbering (if necessary), assume that $P_1$, \ldots, $P_l$ are the minimal prime ideals of $R$ with $t_i=1$, and $P_{l+1}$, \ldots, $P_s$ are the minimal prime ideals of $R$ with $t_i\ge 2$.

By a corollary to a theorem due to Levy-Odenthal~\cite[Theorem~6.2]{LO96}, there exists a $q\times |\Lambda|$ integer matrix $\mcA(R)$ such that $\mcC(R)\cong \ker(\mcA(R))\cap \N^{(\Lambda)}$. The columns of the matrix $\mcA(R)$ are indexed by the isomorphism classes of indecomposable maximal Cohen-Macaulay $\hat{R}$-modules; if $M$ is an indecomposable maximal Cohen-Macaulay $\hat{R}$-module of rank $(r_{1,1},\ldots, r_{1,t_1},\ldots, r_{s,1},\ldots, r_{s,t_s})$, then the column of $\mcA(R)$ indexed by $[M]$ is 
\[
\begin{bmatrix} r_{l+1,1}-r_{l+1,2} & \cdots & r_{l+1,1}-r_{l+1,t_{l+1}} & \cdots &  r_{s,1}-r_{s,2} & \cdots & r_{s,1}-r_{s,t_s}
\end{bmatrix}^T.
\]

Let $\boldsymbol{a}=\begin{bmatrix} a_{l+1,2} & \cdots & a_{l+1,t_{l+1}}& \cdots & a_{s,2} & \cdots & a_{s,t_s}\end{bmatrix}^T\in \Z^{(q)}$. For each $i\in \{l+1, \ldots, s\}$, choose an integer~$b_i$ such that $b_i>\max\{|a_{i,j}| : 2\le j\le t_i\}$. Set 
\[
r_{i,1}=\begin{cases}
1 & \textrm{if $i\in \{1,\ldots, l\}$,}\\ b_i & \textrm{if $i\in \{l+1,\ldots, s\}$,} 
\end{cases}
\]
and for $i\in \{l+1,\ldots, s\}$ and $j\in \{2,\ldots, t_i\}$, set $r_{i,j}=r_{i,1}-a_{i,j}$. Since $\hat{R}/Q$ has infinite Cohen-Macaulay type for all minimal prime ideals $Q$ of $\hat{R}$, the main theorem guarantees the existence of an indecomposable maximal Cohen-Macaulay $\hat{R}$-module $M_{\boldsymbol{a}}$ of rank $\underline{r}=(r_{1,1},\ldots, r_{1,t_1},\ldots, r_{s,1},\ldots, r_{s,t_s})$. By construction, the column indexed by $[M_{\boldsymbol{a}}]$ is $\boldsymbol{a}$.

Observe that the main theorem also guarantees the existence of an indecomposable maximal Cohen-Macaulay $\hat{R}$-module $M_{\boldsymbol{a},n}$ of rank $\underline{r}+(n,\ldots, n)$, $n\in \N$. By construction, the column indexed by $[M_{\boldsymbol{a}, n}]$ is $\boldsymbol{a}$, and we conclude that $\boldsymbol{a}$ appears $|\N|$ times as a column of $\mcA(R)$. If $k$ is infinite, then the main theorem guarantees the existence of $|k|$ pairwise non-isomorphic indecomposable maximal Cohen-Macaulay $\hat{R}$-modules of the same rank $\underline{r}+(n,\ldots, n)$; note that the columns indexed by these indecomposable modules are equal to $\boldsymbol{a}$. Hence $\boldsymbol{a}$ appears $|k|\cdot |\N|$ times as a column of $\mcA(R)$.
\end{proof}

\section{Ranks of indecomposable maximal Cohen-Macaulay modules over $R=k[\![x,y]\!]/((x^3-y^7)x)$} \label{S:example}

The main theorem describes which tuples can be realized as ranks of indecomposable maximal Cohen-Macaulay $R$-modules when $R/P_1$ has infinite Cohen-Macaulay type. In this case, for every non-trivial $s$-tuple $(r_1, \ldots, r_s)$ of non-negative integers, there exists an indecomposable maximal Cohen-Macaulay $R$-module of rank $(r_1,\ldots, r_s)$ provided $r_1\ge r_i$ for all $i\in \{1,\ldots, s\}$. What happens if $r_1<r_i$ for some $i$?

We give an example of a complete local ring $R$ of infinite Cohen-Macaulay type with exactly two minimal prime ideals $P_1$ and $P_2$ such that $R/P_1$ has infinite Cohen-Macaulay type (and
$R/P_2$ has finite Cohen-Macaulay type), for which there are no indecomposable maximal Cohen-Macaulay $R$-modules of rank $(1,n)$ for $n\ge 4$.

Consider the hypersurface
\[
R=k[\![x,y]\!]/((x^3-y^7)x)
\]
with minimal prime ideals $P_1=(x^3-y^7)/((x^3-y^7)x)$ and $P_2=(x)/((x^3-y^7)x)$. Observe that $R/P_1\cong k[\![x,y]\!]/(x^3-y^7)\cong k[\![t^7,t^3]\!]$ has infinite Cohen-Macaulay type, and $R/P_2\cong k[\![y]\!]$ is a discrete valuation ring, hence of finite Cohen-Macaulay type. %Note that a maximal Cohen-Macaulay $R$-module
%of rank $(0,n)$ is an $R/P_2$-module. As the rank of a module over a
%discrete valuation ring is at most one, there do not exist
%indecomposable maximal Cohen-Macaulay $R$-modules of rank $(0,n)$ if
%$n\ge 2$.

The integral closure $\overline{R}$ of $R$ is $\overline{R}=k[\![t]\!]\times k[\![y]\!]$, and the map
$\varphi\colon R\into \overline{R}$ is given by:
\begin{equation}\label{E:e02}
\bar{x} \mapsto (t^7,0),\quad \bar{y} \mapsto (t^3,y).
\end{equation}
Note that $\overline{R}$ is generated as an $R$-module by $(0,1)$ and $(t^i,0)$ for $i\in \{0,1,2\}$. The first Drozd-Ro{\u\i}ter condition fails, and so $R$ has infinite Cohen-Macaulay type.

The conductor of $R$ is $\cond=(x^3,xy^4,y^7)/((x^3-y^7)x)$, and the conductor square is:
\[
\xymatrix{\ds R=\frac{k[\![x,y]\!]}{((x^3-y^7)x)}\ar[r]\ar[d] & k[\![t]\!]\times k[\![y]\!]=\overline{R}\ar[d]\\
\ds A=\frac{R}{\cond}=\frac{k[\![x,y]\!]}{(x^3,xy^4,y^7)} \ar[r] &
\ds \frac{k[\![t]\!]}{(t^{19})}\times \ds\frac{k[\![y]\!]}{(y^7)}
=\frac{\overline{R}}{\cond}=B, }
\]
where $R_{\normalfont\text{art}}=(A\into B)$. Write $B=B_1\times B_2$, where $B_1=k[\![t]\!]/(t^{19})$ and $B_2=k[\![y]\!]/(y^7)$. Reducing the Artinian pair $A\into B$ modulo the maximal ideal $\n$ of $A$, we obtain the Artinian pair $k\into D$, where $D=D_1\times D_2=k[\![t]\!]/(t^3)\times k$.

\subsection{Matrix representation}
Given an $(A\into B)$-module $V\into W$, where $W=B_1^{(r_1)}\times B_2^{(r_2)}$, one can represent $V\into W$ by matrices in the following way. An element $w$ of $W$ can be written as a column vector
\[
w=\left[\begin{array}{c} w_1\\ \hline w_2\end{array}\right],
\]
where each $w_i$ is a $r_i\times 1$ column vector with entries in $B_i$. Let $\{v_1, \ldots, v_m\}$ be a minimal set of generators of $V$ as an $A$-module. Since $V$ is an $A$-submodule of $W$, we can write each generator $v_j$ as
\[
v_j=\left[\begin{array}{c} v_{1,j}\\ \hline
v_{2,j}\end{array}\right].
\]
Thus $V$ is the $A$-column span of the matrix:
\[
Q=\left[\begin{array}{cccc}v_{1,1} & v_{1,2} & \cdots & v_{1,m} \\
\hline v_{2,1} & v_{2,2} & \cdots & v_{2,m} \end{array}\right].
\]
Note that each row is a block matrix of size $r_i\times m$ as each $v_{i,j}$ is an $r_i\times 1$ column vector.

As done in~\cite{GR78}, \cite{rWsW94} and~\cite{C94}, we can perform row and column operations without changing the module. In particular, observe the following.
\begin{enumerate}
\item [(a)] Column operations over $A$ correspond to multiplication on the right by an invertible matrix $R(x,y)$ with entries in $A$ (see the map $\varphi$ in~\eqref{E:e02}):
\[ \left[
\begin{array}{c} Q_1\\ \hline Q_2\end{array}\right] R(x,y)=\left[
\begin{array}{c} Q_1R(t^7,t^3)\\ \hline Q_2R(0,y)\end{array}\right].
\]
Note that column operations over $A$ do not change the module.

\item [(b)] Row operations over $B$ correspond to multiplication on the left by a matrix $\ds \left[\begin{array}{c} T_1\\ \hline T_2\end{array}\right]$, where $T_1$ is an $r_1\times r_1$ invertible matrix with entries in $B_1$ and $T_2$ is an $r_2\times r_2$ invertible matrix with entries in $B_2$:
\[
\left[\begin{array}{c} T_1\\
\hline T_2\end{array}\right]\left[ \begin{array}{c} Q_1\\ \hline
Q_2\end{array}\right]=\left[ \begin{array}{c} T_1Q_1\\ \hline
T_2Q_2\end{array}\right].
\]
Note that row operations over $B$ give a module isomorphic to the original one.
\end{enumerate}

Looking at the matrix representation of $V\into W$, it is possible to determine whether the module is indecomposable or not. In particular, we use the following remark to determine whether an
$(A\into B)$-module $V\into W$, where $W=B_1\times B_2^{(n)}$, decomposes.

\begin{remark}\label{R:decomp}
Consider an $(A\into B)$-module $V\into W$, where $W=B_1\times B_2^{(n)}$. Assume $V$ is generated minimally by $m$ elements and is given by the $A$-column space of the matrix
\[
Q=\left[\begin{array}{ccc}  u_1 & \cdots & u_m \\
\hline & Q_2 &
\end{array}\right],
\]
where $u_i\in B_1$ and $Q_2$ is an $n\times m$ matrix over $B_2$.

If, after row and column operations, the matrix $Q$ is equivalent to the matrix
\[
Q'=\left[\begin{array}{cc} 1 \quad u_2 \quad \cdots \quad u_{n-1}
\quad 0 & u_{n+1} \quad \cdots \quad u_m\\ \hline I_n &
\textbf{0}_{n\times (m-n)}
\end{array}\right],
\]
then $V\into W$ decomposes as a direct sum of two modules; that is,
\[
(V\into W)\cong (V_1\into W_1)\oplus (V_2\into W_2),
\]
where $W_1=B_1\times B_2^{(n-1)}$, $W_2=\{0\}\times B_2$, and $V_1$ and $V_2$ are the $A$-column spaces of the matrices
\[
U_1=\left[\begin{array}{cc} 1 \quad u_2 \quad \cdots \quad u_{n-1} &
u_{n+1} \quad \cdots \quad u_m\\ \hline I_{n-1} &
\textbf{0}_{(n-1)\times
(m-n)} \end{array}\right] \qquad \text{ and } \qquad U_2=\left[\begin{array}{c} \star\\
\hline 1 \end{array}\right].
\]
(Here $\star$ denotes the empty matrix.)
\end{remark}

\subsection{Maximal Cohen-Macaulay $R$-modules of rank $(1,n)$}

We know that there exist an indecomposable maximal Cohen-Macaulay $R$-module of rank $(1,0)$ (namely, the $R$-module $R/P_1$), and an indecomposable maximal Cohen-Macaulay $R$-module of constant rank $(1,1)$. Moreover, there exist indecomposable maximal Cohen-Macaulay $R$-modules of rank $(1,2)$ and $(1,3)$. In fact, by Proposition~\ref{P:local} and Lemma~\ref{L:lemma2}, it is enough to construct indecomposable $(k\into D)$-modules $V_1\into W_1$ and $V_2\into W_2$, where $W_1=D_1\times D_2^{(2)}$ and $W_2=D_1\times D_2^{(3)}$. If $V_1$ and $V_2$ are the $A$-column spaces of the matrices
\[
Q_1=\left[\begin{array}{cc}1 &
t\\ \hline 1 & 0\\ 0 & 1\end{array}\right] \quad \textrm{ and } \quad Q_2=\left[\begin{array}{ccc}1 & t & t^2\\ \hline 1 & 0 & 0\\
0 & 1 & 0\\ 0 & 0 & 1
\end{array}\right],
\]
then $V_1 \into W_1$ and $V_2 \into W_2$ are indecomposable $(k\into D)$-modules with $W_1=D_1\times D_2^{(2)}$ and $W_2=D_1\times D_2^{(3)}$.

We show that there do \textit{not} exist indecomposable maximal Cohen-Macaulay $R$-modules of rank $(1,n)$ for $n\ge 4$. Note that $\overline{R}$ is a direct product of local rings; thus, by Theorem~\ref{T:theorem2}, it is enough to prove the following.

\begin{proposition}
Let $R=k[\![x,y]\!]/((x^3-y^7)x)$, and let $P_1=(x^3-y^7)/((x^3-y^7)x)$ and $P_2=(x)/((x^3-y^7)x)$ be the minimal prime ideals of $R$. Let $R_{\normalfont\text{art}}=(A\into B)$ be the Artinian pair associated to $R$, where $B=B_1\times B_2=k[\![t]\!]/(t^{19})\times k[\![y]\!]/(y^7)$. If $n\ge 4$, then there do not exist indecomposable $R_{\normalfont\text{art}}$-modules $V\into W$ with $W=B_1\times B_2^{(n)}$.
\end{proposition}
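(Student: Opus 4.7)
The proof will proceed by direct matrix manipulation, culminating in an application of Remark~\ref{R:decomp}. Given $V\hookrightarrow W$ with $W=B_1\times B_2^{(n)}$ and $n\ge 4$, I represent $V$ by its matrix $Q=\left[\begin{smallmatrix} u_1\ \cdots\ u_m\\ \hline Q_2\end{smallmatrix}\right]$ with $u_i\in B_1$ and $Q_2$ an $n\times m$ block over $B_2$. Since $BV=W$ and $B_2$ is a principal ideal ring, I use row operations over $B_2$ together with column operations over $A$ (which surjects onto $B_2$) to normalize $Q_2=[I_n\mid 0]$. The projection of $BV$ onto $B_1$ forces some top entry to be a unit; after permuting the first $n$ columns and scaling the top row by a unit of $B_1$, I arrange $u_1=1$. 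Subtracting appropriate $T$-multiples of column $1$ from each other column (with corresponding row corrections on the bottom), where $T=\pi_1(A)\subset B_1$, I reduce each $u_j$ ($j\ge 2$) modulo $T$, so that each $u_j$ may be taken in the $k$-span of the monomials $\{t,t^2,t^4,t^5,t^8,t^{11}\}$, which represents $B_1/T$.

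Next I reduce the matrix modulo $\mathfrak{m}_A$, passing to the Artinian pair $(k\hookrightarrow D)$ with $D_1=B_1/\mathfrak{m}_A B_1=k[\![t]\!]/(t^3)$. Because $\mathfrak{m}_A B_1=t^3 B_1$, the normalized reductions $\bar u_j$ for $j\ge 2$ all lie in $tD_1=k\{t,t^2\}$, a $2$-dimensional $k$-space. For $n\ge 4$ there are at least three such entries $\bar u_2,\bar u_3,\bar u_4$, so they are $k$-linearly dependent. Since $k\subseteq A$, a $GL_{n-1}(k)$ column operation on columns $2,\ldots,n$ lifts to an $A$-column operation; using this I may normalize the reduced matrix so that, in the generic sub-case, $\bar u_2=t$, $\bar u_3=t^2$, and $\bar u_4=0$, while the low-rank sub-cases produce two or more vanishing entries among $\bar u_2,\ldots,\bar u_n$.

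The crux is the lift back to $(A\hookrightarrow B)$ in the generic sub-case. Here the lifts satisfy $u_2\equiv t$ and $u_3\equiv t^2$ modulo $t^3B_1=\mathfrak{m}_T B_1$. Since $\{1,t,t^2\}$ generates $B_1$ as a $T$-module, the submodule $N=T+Tu_2+Tu_3\subseteq B_1$ satisfies $N+\mathfrak{m}_T B_1=B_1$, so by Nakayama's lemma $N=B_1$. In particular $u_4\in B_1=N$, so there exist $c,c',c''\in T$ with $u_4=c''+cu_2+c'u_3$; the column operation $u_4\mapsto u_4-c''-cu_2-c'u_3=0$ now produces a column $(0,e_4)$, and Remark~\ref{R:decomp} gives a nontrivial direct-sum decomposition, contradicting indecomposability.

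The hard part is the low-rank sub-cases, where the span of $\bar u_2,\ldots,\bar u_n$ in $tD_1$ has rank at most one. Then the naive Nakayama argument fails, since $N=T+Tu_2$ need not equal $B_1$ (for instance $t^2\notin T+T(t+r_2)+\mathfrak{m}_TB_1$ when $r_2\in t^3B_1$ is generic). In such sub-cases I plan to exploit three additional sources of freedom: the independent $T^*$-scalings on each column $u_j$; the column operations $u_i\mapsto u_i+cu_j$ with $c\in t^7T=\pi_1(xA)$ that come from $\ker(\pi_2)=xA$ and do not disturb the bottom block; and, when $m>n$, the extra columns $u_i$ with $i>n$ whose tops can be added freely by $T$-multiples to any other column. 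The main obstacle is to show that together these give enough room to either directly exhibit an element $(0,e_j)\in V$ (which then splits off as in Remark~\ref{R:decomp}), or to iterate the Nakayama reduction on deeper powers of $\mathfrak{m}_T$ until a column of the required form is produced.
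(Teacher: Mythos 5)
Your Step 1 and your reduction of the tops $u_2,\dots,u_n$ modulo $T=\pi_1(A)$ into the $k$-span of the semigroup gaps $\{t,t^2,t^4,t^5,t^8,t^{11}\}$ coincide with Steps 1 and 2 of the paper's proof, and your generic sub-case is correct: when the images $\bar u_2,\dots,\bar u_n$ span all of $kt+kt^2$ in $B_1/t^3B_1$, normalizing $\bar u_2=t$, $\bar u_3=t^2$ and invoking Nakayama to get $T+Tu_2+Tu_3=B_1$ lets you zero the top of a fourth column and split off $(0\into B_2)$ via Remark~\ref{R:decomp}. This is in fact a slicker packaging of the paper's first branch (the case $a_{2,1}\ne 0$, $a_{3,2}\ne 0$), where the same conclusion is reached by explicitly clearing $t,t^4,t^8,t^{11}$ and then $t^2,t^5,t^8,t^{11}$ using multiplications by $t^3,t^6,t^7,t^{10}\in T$.

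However, there is a genuine gap: the low-rank sub-cases, which you explicitly leave as a plan (``I plan to exploit\dots'', ``The main obstacle is to show\dots''), are precisely where the content of the proposition lies, and the extra freedoms you propose do not resolve them. When all $\bar u_i$ vanish (or span only a line) in $kt+kt^2$, the surviving tops lie in deeper gap strata such as $\mathrm{span}_k\{t^4,t^5,t^8,t^{11}\}$, and neither the $t^7T$-operations coming from $\ker(\pi_2)=xA$ nor the extra columns with $i>m-n$ are needed or sufficient; what is needed is the finite descent through the filtration $\{t,t^2\}$, $\{t^4,t^5\}$, $\{t^8\}$, $\{t^{11}\}$ that the paper carries out case by case. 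The key computations are semigroup identities unavailable to a blind ``iterate Nakayama on powers of $\mathfrak{m}_T$'' scheme: for instance, if $u_2$ has unit coefficient at $t^4$ one can clear only $t^4$ and $t^{11}=t^7\cdot t^4$ from the other columns (since $t^4\cdot t^3=t^7\in T$ is useless for $t^5$ and $t^8$), leaving $u_i\in\mathrm{span}_k\{t^5,t^8\}$ for $i\ge 3$, whereupon a unit at $t^5$ clears $t^5$, $t^8=t^3\cdot t^5$ and $t^{11}=t^6\cdot t^5$ everywhere else; similar identities handle the remaining branches, and in every branch one finds $u_i=0$ for all $i\ge 4$, which is what $n\ge 4$ requires. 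Your Nakayama mechanism genuinely fails here, as you note ($T+T(t+r)$ with $r\in t^3B_1$ need not contain $t^2$, and likewise $T+Tu_2+Tu_3$ need not be all of $B_1$ in the deep strata); the correct replacement is not more column freedom but the observation that the other columns' tops already lie in the $T$-module generated by $1$ and the leading columns at each stratum. Until this case analysis (the bulk of the paper's Step 2) is actually carried out, the proof is incomplete.
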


\begin{proof}
Fix $n\ge 4$. Let $V\into W$ be an $(A \into B)$-module, where $W=B_1\times B_2^{(n)}$. Assume $V$ is generated minimally by $m$ elements and is the $A$-column space of
\[
Q=\left[ \begin{array}{c} Q_1\\ \hline Q_2\end{array}\right] =
\left[
\begin{array}{ccc} u_1& \cdots & u_m\\ \hline v_{1,1} & \cdots &
v_{1,m}\\ \vdots & & \vdots \\ v_{n,1} & \cdots & v_{n,m}
\end{array}\right],\qquad u_i\in
B_1=k[\![t]\!]/(t^{19}), \quad v_{i,j}\in
B_2=k[\![y]\!]/(y^7).
\]
Since $V\into W$ is an $(A\into B)$-module, we have $BV=W$. This relation means that
\begin{enumerate}
\item [(i)] the $B_1$-span of the columns of $Q_1$ equals $B_1$, and
\item [(ii)] the $B_2$-span of the columns of $Q_2$
equals $B_2^{(n)}$.
\end{enumerate}
In particular, (i) implies that the $1\times m$ matrix $Q_1=(u_i)$ has a unit element. Similarly, (ii) implies that the $n\times m$ matrix $Q_2=(v_{i,j})$ has an $n\times n$ invertible sub-matrix. As a consequence, $m\ge n$.

\bigskip

\noindent \textbf{Step 1.}\\
Since $Q_2$ has an $n\times n$ invertible sub-matrix, first we use row operations over $B$ (and eventually swap columns) to obtain the identity matrix $I_n$ on the bottom part of $Q$, and then we use column operations over $A$ to clear every element in each row of $Q_2$ except the entry $(i,i)$. Observe that this last step can be done because $R/P_2\cong k[\![y]\!]$ is a discrete valuation ring, and so the map $A\to B_2=k[\![y]\!]/(y^7)$ is surjective. Now
\begin{equation}\label{E:e1}
Q_2=\begin{bmatrix} I_n & \textbf{0}_{n\times (m-n)}\end{bmatrix}.
\end{equation}
Let $e_1$ be the unit element in $Q_1$. If $e_1$ occurs in the first $n$ columns of $Q_1$, then we can permute columns so that $e_1$ appears in the first column of $Q_1$. By permuting the rows of
$Q_2$, we can recover the form~\eqref{E:e1}. On the other hand, if $e_1$ occurs in the last $m-n$ columns, then we can do one or more column operations without affecting $Q_2$ to put $e_1$ in the first column of $Q_1$.

Thus we can assume that $V$ is (isomorphic to) the $A$-column space of
\begin{equation}\label{E:e2}
Q=\left[ \begin{array}{c} Q_1 \\
\hline Q_2 \end{array}\right]=\left[ \begin{array}{cc} 1\quad u_2
\quad \cdots & \cdots \quad u_m\\ \hline I_n &
\textbf{0}_{n\times(m-n)}
\end{array} \right].
\end{equation}

\bigskip

\noindent \textbf{Step 2.}\\ In this step, we do column operations over $A$ involving the first $n$ columns in order to clean up $Q_1$. After each step, we restore $Q_2$ to the form~\eqref{E:e1} via row operations over $B$.

Write each element $u_i\in B_1$, $i\in \{2,\ldots,m\}$, as $u_i=\sum_{j=0}^{18} a_{i,j}t^j$ for some $a_{i,j}\in k$. Since $u_1=1$, we can use column operations over $A$ to clear the terms involving $t^{r}$ in $u_2$, \ldots, $u_n$ for $r$ in the additive semigroup generated by $0$, $3$ and $7$. Thus the first $n$ elements in $Q_1$ are of the form:
\[
u_1=1, \qquad
u_i=a_{i,1}t+a_{i,2}t^2+a_{i,4}t^4+a_{i,5}t^5+a_{i,8}t^8+a_{i,11}t^{11},
\quad a_{i,j}\in k, \quad 2\le i\le n.
\]

If there is $i\in \{2,\ldots, n\}$ such that $a_{i,1}\ne 0$ (without loss of generality, assume $i=2$), then we can use column operations over $A$ to clear the terms involving $t$, $t^4$, $t^8$ and $t^{11}$ in $u_3$, \ldots, $u_n$. Otherwise, we have $a_{i, 1}=0$ for all $i\in \{1,\ldots, n\}$. Thus either
\begin{equation}\label{E:e110}
u_2=a_{2,1}t+a_{2,2}t^2+a_{2,4}t^4+a_{2,5}t^5+a_{2,8}t^8+a_{2,11}t^{11},
\qquad u_i=a_{i,2}t^2+a_{i,5}t^5, \quad 3\le i\le n,
\end{equation}
or
\begin{equation}\label{E:e120}
u_i=a_{i,2}t^2+a_{i,4}t^4+a_{i,5}t^5+a_{i,8}t^8+a_{i,11}t^{11},
\quad 2\le i\le n.
\end{equation}

In case \eqref{E:e110}, if there is $i\in \{3,\ldots, n\}$ such that $a_{i,2}\ne 0$ (without loss of generality, assume $i=3$), then we can use column operations over $A$ to clear the terms involving $t^2$, $t^5$, $t^8$ and $t^{11}$ in $u_2$, $u_4$, \ldots, $u_n$, and to obtain $u_i=0$ for all $i\in \{4, \ldots, n\}$. If, on the other hand, $a_{i,2}=0$ for all $i\in \{3, \ldots, n\}$, then
\[
u_2=a_{2,1}t+a_{2,2}t^2+a_{2,4}t^4+a_{2,5}t^5+a_{2,8}t^8+a_{2,11}t^{11},
\qquad u_i=a_{i,5}t^5, \quad 3\le i\le n,
\]
and we can use column operations over $A$ to obtain $u_i=0$ for all $i\in \{4,\ldots, n\}$. 

In case \eqref{E:e120}, if there is $i\in \{2,\ldots, n\}$ such that $a_{i,2}\ne 0$ (without loss of generality, assume $i=2$), then we can use column operations over $A$ to clear the terms involving $t^2$, $t^5$, $t^8$ and $t^{11}$ in $u_3$, \ldots, $u_n$. Thus 
\[
u_2=a_{2,2}t^2+a_{2,4}t^4+a_{2,5}t^5+a_{2,8}t^8+a_{2,11}t^{11}, \qquad
u_i=a_{i,4}t^4, \quad 3\le i\le n,
\]
and we can use column operations over $A$ to obtain $u_i=0$ for all $i\in \{4,\ldots, n\}$. If, on the other hand, $a_{i,2}=0$ for all $i\in \{1, \ldots, n\}$, then 
\[
u_i=a_{i,4}t^4+a_{i,5}t^5+a_{i,8}t^8+a_{i,11}t^{11}, \quad 2\le i\le
n.
\]
If there is $i\in \{2,\ldots, n\}$ such that $a_{i,4}\ne 0$ (without loss of generality, assume $i=2$), then we can use column operations over $A$ to clear the terms involving $t^4$ and $t^{11}$ in $u_3$, \ldots, $u_n$. Otherwise, we have $a_{i,4}=0$ for all $i\in \{1, \ldots, n\}$. Thus either
\begin{equation}\label{E:e190}
u_2=a_{2,4}t^4+a_{2,5}t^5+a_{2,8}t^8+a_{2,11}t^{11}, \qquad
u_i=a_{i,5}t^5+a_{i,8}t^8, \quad 3\le i\le n,
\end{equation}
or
\begin{equation}\label{E:e200}
u_i=a_{i,5}t^5+a_{i,8}t^8+a_{i,11}t^{11}, \quad 2\le i\le n.
\end{equation}

In case \eqref{E:e190}, if there is $i\in \{3,\ldots, n\}$ such that $a_{i,5}\ne 0$ (without loss of generality, assume $i=3$), then we can use column operations over $A$ to clear the terms involving $t^5$, $t^8$ and $t^{11}$ in $u_2$, $u_4$, \ldots, $u_n$, and to obtain $u_i=0$ for all $i\in \{4,\ldots, n\}$. If, on the other hand, $a_{i,5}=0$ for all $i\in \{3, \ldots, n\}$, then 
\[
u_2=a_{2,4}t^4+a_{2,5}t^5+a_{2,8}t^8+a_{2,11}t^{11}, \qquad u_i=a_{i,8}t^8,
\quad 3\le i\le n,
\]
and we can use column operations over $A$ to obtain $u_i=0$ for all $i\in \{4,\ldots, n\}$.

In case \eqref{E:e200}, if there is $i\in \{2,\ldots, n\}$ such that $a_{i,5}\ne 0$ (without loss of generality, assume $i=2$), then we can use column operations over $A$ to clear the terms involving $t^5$, $t^8$ and $t^{11}$ in $u_3$, \ldots, $u_n$, and to obtain $u_i=0$ for all $i\in \{3,\ldots, n\}$. If, on the other hand, $a_{i,5}=0$ for all $i\in \{1, \ldots, n\}$, then 
\[
u_i=a_{i,8}t^8+a_{i,11}t^{11}, \quad 2\le i\le n.
\]
If there is $i\in \{2,\ldots, n\}$ such that $a_{i,8}\ne 0$ (without loss of generality, assume $i=2$), then we can use column operations over $A$ to clear the terms involving $t^8$ and $t^{11}$ in $u_3$, \ldots, $u_n$, and to obtain $u_i=0$ for all $i\in \{3,\ldots n\}$. If, on the other hand, $a_{i,8}=0$ for all $i\in \{1, \ldots, n\}$, then 
\[
u_i=a_{i,11}t^{11}, \quad 2\le i\le n,
\]
and we can use column operations over $A$ to obtain $u_i=0$ for all $i\in \{3,\ldots, n\}$.

%Hence
%\[
%\begin{cases}
%u_1=1,\\
%u_2=a_{2,1}t+a_{2,2}t^2+a_{2,4}t^4+a_{2,5}t^5+a_{2,8}t^8+a_{2,11}t^{11},\\
%u_3=a_{3,2}t^2+a_{3,4}t^4+a_{3,5}t^5+a_{3,8}t^8,\\ u_i=0, \quad 4\le i \le n.
%\end{cases}
%\]
In conclusion, for $n\ge4$, at least one element $u_i$ in $Q_1$ is zero for some $i\in \{1,\ldots, n\}$. By Remark~\ref{R:decomp}, the matrix $Q$ decomposes and so does the module $V\into W$.
\end{proof}

\section*{Acknowledgements}
The authors thank Roger Wiegand for his helpful insights and numerous suggestions, and careful reading of earlier versions of this article.

\bibliographystyle{amsplain}
\bibliography{referencesResearch}

\end{document}